\newtheorem{theorem}{Theorem}
\theoremstyle{plain}
\newtheorem{proposition}{Proposition}
\numberwithin{equation}{section}
\begin{document}
\title[Must Derive the Uehling-Uhlenbeck Equation below $W^{4,1}$]{On the
Weak Coupling Limit of Quantum Many-body Dynamics and the Quantum Boltzmann
Equation}
\author{Xuwen Chen}
\address{Department of Mathematics, Brown University, 151 Thayer Street,
Providence, RI 02912}
\email{chenxuwen@math.brown.edu}
\urladdr{http://www.math.brown.edu/\symbol{126}chenxuwen/}
\author{Yan Guo}
\address{Division of Applied Mathematics, Brown University, 182 George
Street, Providence, RI 02912}
\email{yan\_guo@brown.edu}
\urladdr{http://www.cfm.brown.edu/people/guoy/}
\keywords{Quantum Many-body Dynamics, Quantum Boltzmann Equation, BBGKY
Hierarchy, Uehling-Uhlenbeck Equation, Weak Coupling}

\begin{abstract}
The rigorous derivation of the Uehling-Uhlenbeck equation from more
fundamental quantum many-particle systems is a challenging open problem in
mathematics. In this paper, we exam the weak coupling limit of quantum $N$%
-particle dynamics. We assume the integral of the microscopic interaction is
zero and we assume $W^{4,1}$ per-particle regularity on the coressponding
BBGKY sequence so that we can rigorously commute limits and integrals. We
prove that, if the BBGKY sequence does converge in some weak sense, then
this weak-coupling limit must satisfy the infinite quantum Maxwell-Boltzmann
hierarchy instead of the expected infinite Uehling-Uhlenbeck hierarchy,
regardless of the statistics the particles obey. Our result indicates that,
in order to derive the Uehling-Uhlenbeck equation, one must work with
per-particle regularity bound below $W^{4,1}$.
\end{abstract}

\maketitle

\section{Introduction}

The rigorous derivation of the celebrated Uehling-Uhlenbeck equation from
more fundamental quantum many-particle systems is a challenging open problem
in mathematics. This problem has received a lot of attentions in recent
years. In particular, Erd\"{o}s, Salmhofer and Yau have given, in \cite%
{ErdosFock}, a formal derivation of the spatially homogeneous
Uehling-Uhlenbeck equation as the thermodynamic limit from the Fock space
model. Around the same time, in \cite{BCEP1,BCEP2,BCEP4}, Benedetto,
Castella, Esposito, and Pulvirenti initiated a different study of the
problem with the "classical $N$-particle
Bogoliubov--Born--Green--Kirkwood--Yvon (BBGKY) hierarchy" approach. Here,
"classical BBGKY hierarchy" means the usual BBGKY hierarchy in $\mathbb{R}%
^{3N+1}$. Moreover, Benedetto, Castella, Esposito, and Pulvirenti consider
the $N\rightarrow \infty $ limit of $N$ particles in $\mathbb{R}^{3}$
instead of the thermodynamic limit. In this paper, we follow the classical
BBGKY hierarchy approach in \cite{BCEP1,BCEP2,BCEP4}. Let $t\in \mathbb{R}$, 
$\mathbf{x}_{k}=\left( x_{1},...,x_{k}\right) ,\mathbf{v}_{k}=\left(
v_{1},...,v_{k}\right) \in \mathbb{R}^{3k}$, $\varepsilon =N^{-\frac{1}{3}}$%
, and $\phi $ be an even pair interaction. We consider the following quantum
BBGKY hierarchy%
\begin{equation}
\left( \partial _{t}+\mathbf{v}_{k}\cdot \nabla _{\mathbf{x}_{k}}\right)
f_{N}^{(k)}=\frac{1}{\sqrt{\varepsilon }}A_{\varepsilon }^{(k)}f_{N}^{(k)}+%
\frac{N}{\sqrt{\varepsilon }}B_{\varepsilon }^{(k+1)}f_{N}^{(k+1)},
\label{hierarchy:QB-BBGKY in Differential Form}
\end{equation}%
where%
\begin{eqnarray*}
\frac{1}{\sqrt{\varepsilon }}A_{\varepsilon }^{(k)} &=&\sum_{1\leqslant
i<j\leqslant k}\frac{1}{\sqrt{\varepsilon }}A_{i,j}^{\varepsilon }, \\
\frac{N}{\sqrt{\varepsilon }}B_{\varepsilon }^{(k+1)} &=&\sum_{j=1}^{k}\frac{%
N}{\sqrt{\varepsilon }}B_{j,k+1}^{\varepsilon },
\end{eqnarray*}%
with%
\begin{eqnarray*}
\frac{1}{\sqrt{\varepsilon }}A_{i,j}^{\varepsilon }f_{N}^{(k)} &=&\frac{-i}{%
\sqrt{\varepsilon }}\frac{1}{\left( 2\pi \right) ^{3}}\sum_{\sigma =\pm
1}\sigma \int_{\mathbb{R}^{3}}e^{\frac{ih\cdot (x_{i}-x_{j})}{\varepsilon }}%
\hat{\phi}(h) \\
&&\times f_{N}^{(k)}\left( t,\mathbf{x}_{k},v_{1}...,v_{i-1},v_{i}-\sigma 
\frac{h}{2},v_{i+1},...,v_{j-1},v_{j}+\sigma \frac{h}{2},v_{j+1},...,v_{k}%
\right) dh, \\
\frac{N}{\sqrt{\varepsilon }}B_{j,k+1}^{\varepsilon }f_{N}^{(k+1)} &=&-i%
\frac{N}{\sqrt{\varepsilon }}\frac{1}{\left( 2\pi \right) ^{3}}\sum_{\sigma
=\pm 1}\sigma \int_{\mathbb{R}^{3}}dx_{k+1}\int_{\mathbb{R}%
^{3}}dv_{k+1}\int_{\mathbb{R}^{3}}e^{\frac{ih\cdot (x_{j}-x_{k+1})}{%
\varepsilon }}\hat{\phi}(h) \\
&&\times f_{N}^{(k+1)}\left( t,\mathbf{x}%
_{k},x_{k+1},v_{1}...,v_{j-1},v_{j}-\sigma \frac{h}{2},v_{j+1},...,v_{k+1}+%
\sigma \frac{h}{2}\right) dh.
\end{eqnarray*}%
We would like to immediately remark that, we have not assumed anything about
the statistics the particles obey, though it seems that writing down
hierarchy (\ref{hierarchy:QB-BBGKY in Differential Form}) like \cite[(2.18)]%
{BCEP2} suggests that we are assuming either the Bose-Einstein or the
Fermi-Dirac statistics. One can check from \cite[(2.18-2.22)]{BCEP2} and 
\cite[(2.10-2.16)]{BCEP4} that the BBGKY hierarchy for the Bose-Einstein /
Fermi-Dirac statistics and for the Maxwell-Boltzmann statistics are
identical, though their initial data are totally different. To be precise,
the coefficient of $B_{\varepsilon }^{(k+1)}f_{N}^{(k+1)}$ is $\frac{%
\varepsilon ^{-3}}{\sqrt{\varepsilon }}$ in \cite[(2.18)]{BCEP2} while the
coefficient of $B_{\varepsilon }^{(k+1)}f_{N}^{(k+1)}$ in \cite[(2.14)]%
{BCEP4} is $\frac{N-k}{\sqrt{\varepsilon }}.$ Since $N=\varepsilon ^{-3}$,
the difference $\frac{-k}{\sqrt{\varepsilon }}B_{j,k+1}^{\varepsilon }$ must
tend to zero as long as $\frac{N}{\sqrt{\varepsilon }}B_{j,k+1}^{\varepsilon
}=\frac{\varepsilon ^{-3}}{\sqrt{\varepsilon }}B_{j,k+1}^{\varepsilon }$
tends to a definite limit as $\varepsilon \rightarrow 0$ for every fixed $k$%
. Hence, we have not assumed anything about the statistics the particles
obey.

We shall not go into the details about the rise of hierarchy (\ref%
{hierarchy:QB-BBGKY in Differential Form}). We refer the interested readers
to \cite{BCEP2} and \cite{BCEP4}. The $\varepsilon \rightarrow 0$ limit of
hierarchy (\ref{hierarchy:QB-BBGKY in Differential Form}) is called the
weak-coupling limit of quantum many-body dynamics. As mentioned in \cite%
{BCEP4}, this is characterized by the fact that the potential interaction is
weak in the sense that it is of order $\sqrt{\varepsilon }$ and the density
of particles is $1$. Therefore the number of collisions per unit time is $%
\varepsilon ^{-1}$. Since the quantum mechanical cross--section in the Born
approximation (justified because the potential is small) is quadratic in the
potential interaction, the accumulated effect is of the order 
\begin{equation*}
\text{number of collisions}\ \times \lbrack \text{potential interaction}%
]^{2}=1/\varepsilon \times \varepsilon =1.
\end{equation*}

We are concerned with the central question of identifying the weak coupling
limit $\varepsilon \rightarrow $ $0$ (or equivalently $N\rightarrow \infty $%
) for such a quantum BBGKY hierarchy (\ref{hierarchy:QB-BBGKY in
Differential Form}), even at a formal level. The expected $\varepsilon
\rightarrow $ $0$ limit of hierarchy (\ref{hierarchy:QB-BBGKY in
Differential Form}) is the infinite Uehling-Uhlenbeck hierarchy which is
defined by%
\begin{equation}
\left( \partial _{t}+\mathbf{v}_{k}\cdot \nabla _{\mathbf{x}_{k}}\right)
f^{(k)}=\sum_{j=1}^{k}Q_{1,j,k+1}f^{(k+1)}+\sum_{j=1}^{k}Q_{2,j,k+2}f^{(k+2)}
\label{hierarchy:U-U}
\end{equation}%
where the two particle term $Q_{1,j,k+1}$ is given by 
\begin{eqnarray*}
Q_{1,j,k+1}f^{(k+1)}\left( \mathbf{x}_{k},\mathbf{v}_{k}\right)  &=&\int
dv_{j}^{\prime }dv_{k+1}dv_{k+1}^{\prime }W\left(
v_{j},v_{k+1}|v_{j}^{\prime },v_{k+1}^{\prime }\right)  \\
&&\times \{f^{(k+1)}(\mathbf{x}_{k},x_{j},v_{1},...,v_{j}^{\prime
},...,v_{k+1}^{\prime }) \\
&&-f^{(k+1)}(\mathbf{x}_{k},x_{j},v_{1},...,v_{k+1})\},
\end{eqnarray*}%
and the three particle term $Q_{2,j,k+2}$ is given by 
\begin{eqnarray*}
Q_{2,j,k+2}f^{(k+2)}\left( \mathbf{x}_{k},\mathbf{v}_{k}\right)  &=&8\pi
^{3}\theta \int dv_{j}^{\prime }dv_{k+1}dv_{k+1}^{\prime }W\left(
v_{j},v_{k+1}|v_{j}^{\prime },v_{k+1}^{\prime }\right)  \\
&&\times \{f^{(k+2)}(\mathbf{x}_{k},x_{j},x_{j},v_{1},...,v_{j}^{\prime
},...,v_{k+1}^{\prime },v_{j}) \\
&&+f^{(k+2)}(\mathbf{x}_{k},x_{j},x_{j},v_{1},...,v_{j}^{\prime
},...,v_{k+1}^{\prime },v_{k+1}) \\
&&-f^{(k+2)}(\mathbf{x}_{k},x_{j},x_{j},v_{1},...,v_{k+1},v_{j}^{\prime }) \\
&&-f^{(k+2)}(\mathbf{x}_{k},x_{j},x_{j},v_{1},...,v_{k+1},v_{k+1}^{\prime
})\}.
\end{eqnarray*}%
In the above,%
\begin{eqnarray*}
W\left( v,v^{\prime }|v_{\ast },v_{\ast }^{\prime }\right)  &=&\frac{1}{8\pi
^{2}}\left[ \hat{\phi}(v^{\prime }-v)+\theta \hat{\phi}(v^{\prime }-v_{\ast
})\right] ^{2}\delta (v+v_{\ast }-v^{\prime }-v_{\ast }^{\prime }) \\
&&\times \delta (\frac{1}{2}\left( v^{2}+v_{\ast }^{2}-\left( v^{\prime
}\right) ^{2}-\left( v_{\ast }^{\prime }\right) ^{2}\right) ).
\end{eqnarray*}%
and $\theta =\pm 1$ for bosons and fermions respectively. The expected
mean-field equation for the infinite Uehling-Uhlenbeck hierarchy (\ref%
{hierarchy:U-U}) is exactly the Uehling-Uhlenbeck equation \cite{U-U}. It
arises as the special solution%
\begin{equation*}
f^{(k)}(t,\mathbf{x}_{k},\mathbf{v}_{k})=\dprod%
\limits_{j=1}^{k}f(t,x_{j},v_{j})
\end{equation*}%
to hierarchy (\ref{hierarchy:U-U}), provided that $f$ satisfies

\begin{eqnarray}
&&\partial _{t}f+v\cdot \nabla _{x}f  \label{eqn:U-U} \\
&=&\int dv_{\ast }dv_{\ast }^{\prime }dv^{\prime }W\left( v,v_{\ast
}|v^{\prime },v_{\ast }^{\prime }\right)  \notag \\
&&\left\{ f^{\prime }f_{\ast }^{\prime }\left( 1+8\pi ^{3}\theta f\right)
\left( 1+8\pi ^{3}\theta f_{\ast }\right) -ff_{\ast }\left( 1+8\pi
^{3}\theta f^{\prime }\right) \left( 1+8\pi ^{3}\theta f_{\ast }^{\prime
}\right) \right\} .  \notag
\end{eqnarray}

However, to our surprise, we find that, as long as $\left\{
f_{N}^{(k+1)}\right\} $ is of $W^{4,1}$ per-particle regularity, the
infinite Uehling-Uhlenbeck hierarchy (\ref{hierarchy:U-U}) is not the $%
\varepsilon \rightarrow 0$ limit of the BBGKY hierarchy (\ref%
{hierarchy:QB-BBGKY in Differential Form}) regardless of the statistics the
particles obey. We find that, regardless of the statistics the particles
obey, the $\varepsilon \rightarrow 0$ limit of the BBGKY hierarchy (\ref%
{hierarchy:QB-BBGKY in Differential Form}) is the infinite quantum
Maxwell-Boltzmann hierarchy coming from \cite{BCEP4}, defined as (\ref%
{hierarchy:QBHierarchy in differential form}) in this paper. In fact, if one
formally commutes integrals and $\lim_{\varepsilon \rightarrow 0}$ in
approriate places, one finds that, regardless of the statistics the
particles obey, the formal $\varepsilon \rightarrow 0$ limit of the BBGKY
hierarchy (\ref{hierarchy:QB-BBGKY in Differential Form}) must be the
infinite quantum Maxwell-Boltzmann hierarchy (\ref{hierarchy:QBHierarchy in
differential form}) instead of the infinite Uehling-Uhlenbeck hierarchy (\ref%
{hierarchy:U-U}).

To this end, we define the quantum Maxwell-Boltzmann hierarchy to be%
\begin{equation}
\left( \partial _{t}+\mathbf{v}_{k}\cdot \nabla _{\mathbf{x}_{k}}\right)
f^{(k)}=\sum_{j=1}^{k}C_{j,k+1}f^{(k+1)}
\label{hierarchy:QBHierarchy in differential form}
\end{equation}%
with%
\begin{eqnarray}
C_{j,k+1}f^{(k+1)} &=&\frac{1}{8\pi ^{2}}\int_{\mathbb{R}^{3}}dv_{j+1}\int_{%
\mathbb{S}^{2}}dS_{\omega }\left\vert \omega \cdot \left(
v_{j}-v_{k+1}\right) \right\vert \left\vert \hat{\phi}\left( \left( \omega
\cdot \left( v_{j}-v_{k+1}\right) \right) \omega \right) \right\vert ^{2}
\label{def:collision operator} \\
&&\times \lbrack f^{(k+1)}\left( t,\mathbf{x}%
_{k},x_{j},v_{1}...,v_{j-1},v_{j}^{\prime },v_{j+1},...,v_{k+1}^{\prime
}\right) -  \notag \\
&&f^{(k+1)}\left( t,\mathbf{x}%
_{k},x_{j},v_{1}...,v_{j-1},v_{j},v_{j+1},...,v_{k+1}\right) ].  \notag
\end{eqnarray}%
The $C_{j,k+1}$ in (\ref{hierarchy:QBHierarchy in differential form}) is
certainly the Boltzmann collision operator. We use $A^{\varepsilon }$ and $%
B^{\varepsilon }$ to denote the inhomogeneous terms in (\ref%
{hierarchy:QB-BBGKY in Differential Form}) because neither of the $%
\varepsilon \rightarrow 0$ limit of $A^{\varepsilon }$ nor $B^{\varepsilon }$
along gives the collision operator $C.$ The collision operator $C_{j,k+1}$
in (\ref{hierarchy:QBHierarchy in differential form}) arises as the $%
\varepsilon \rightarrow 0$ limit of a suitable composition of $%
A^{\varepsilon }$ and $B^{\varepsilon }.$

Notice that, the mean-field equation of hierarchy (\ref%
{hierarchy:QBHierarchy in differential form}) is not the Uehling-Uhlenbeck
equation (\ref{eqn:U-U}). If one assumes Maxwell-Boltzmann statistics as
well as $\hat{\phi}(0)=0,$ a term by term convergence from (\ref%
{hierarchy:QB-BBGKY in Differential Form}) to (\ref{hierarchy:QBHierarchy in
differential form}) was rigorously established in \cite{BCEP4}. The
mean-field equation in this case is the quantum Boltzmann equation:%
\begin{equation}
\left( \partial _{t}+v\cdot \nabla _{x}\right) f=Q(f,f)
\label{eqn:proposed QBEquation}
\end{equation}%
where the collision operator $Q$ is given by%
\begin{eqnarray*}
Q(f,f) &=&\frac{1}{8\pi ^{2}}\int_{\mathbb{R}^{3}}dv_{1}\int_{\mathbb{S}%
^{2}}dS_{\omega }\left\vert \omega \cdot \left( v-v_{1}\right) \right\vert
\left\vert \hat{\phi}\left( \left( \omega \cdot \left( v-v_{1}\right)
\right) \omega \right) \right\vert ^{2} \\
&&\left[ f\left( t,x,v^{\prime }\right) f(t,x,v_{1}^{\prime
})-f(t,x,v)f(t,x,v_{1})\right] ,
\end{eqnarray*}%
and $\hat{\phi}$ is the Fourier transform of $\phi $, and%
\begin{equation*}
v^{\prime }=v-\left( \left[ v-v_{1}\right] \cdot \omega \right) \omega ,%
\text{ \ \ \ \ }v_{1}^{\prime }=v_{1}+\left( \left[ v-v_{1}\right] \cdot
\omega \right) \omega ,
\end{equation*}%
because%
\begin{equation*}
f^{(k)}(t,\mathbf{x}_{k},\mathbf{v}_{k})=\dprod%
\limits_{j=1}^{k}f(t,x_{j},v_{j})
\end{equation*}%
is a solution to hierarchy (\ref{hierarchy:QBHierarchy in differential form}%
) provided that $f$ solves (\ref{eqn:proposed QBEquation}).

In our main theorem, we assume $W^{4,1}$ per-particle regularity on the
BBGKY sequence $\left\{ f_{N}^{(k)}\right\} _{k=1}^{N}$ so that we can
rigorously commute limits and integrals in suitable places. We are then able
to prove that, if the BBGKY sequence $\left\{ f_{N}^{(k)}\right\} _{k=1}^{N}$
does converge in some weak sense, then the limit sequence $\left\{
f^{(k)}=\lim_{N\rightarrow \infty }f_{N}^{(k)}\right\} _{k=1}^{\infty }$
must satisfy the infinite quantum Maxwell-Boltzmann hierarchy (\ref%
{hierarchy:QBHierarchy in differential form}) instead of the infinite
Uehling-Uhlenbeck hierarchy (\ref{hierarchy:U-U}), regardless of the
statistics the particles obey. We work in the space $W_{k}^{4,1}$ which is $%
W^{4,1}(\mathbb{R}^{3k}\times \mathbb{R}^{3k})$ equiped with the weak
topology. We work with the norm%
\begin{equation*}
\left\Vert f^{(k)}\right\Vert
_{W_{k}^{4,1}}=\sum_{j=1}^{k}\sum_{m=0}^{4}\left( \left\Vert \partial
_{x_{j}}^{m}f^{(k)}\right\Vert _{L^{1}}+\left\Vert \partial
_{v_{j}}^{m}f^{(k)}\right\Vert _{L^{1}}\right) .
\end{equation*}%
Our main theorem is the following.

\begin{theorem}[Main Theorem]
\label{THM:MainTheorem}Assume the interaction potential $\phi $ is an even
Schwarz class function and satisfies the vanishing condition: $\hat{\phi}$
vanishes at the origin to at least 11th order. Suppose a subsequence of $%
\left\{ \Gamma _{N}=\left\{ f_{N}^{(k)}\right\} _{k=1}^{N}\right\} _{N}$
converges weakly to some $\Gamma =\left\{ f^{(k)}\right\} _{k=1}^{\infty }$
in the following sense:

(1) In $C\left( \left[ 0,T\right] ,W_{k}^{4,1}\right) $, we have,%
\begin{equation*}
f_{N}^{(k)}\rightarrow f^{(k)}\text{ as }N\rightarrow \infty ,
\end{equation*}

(2) There is a $C>0$ such that%
\begin{equation*}
\sup_{k,N,t\in \left[ 0,T\right] }\frac{1}{k}\left\Vert
f_{N}^{(k)}\right\Vert _{W_{k}^{4,1}}\leqslant C.
\end{equation*}
Then $\Gamma =\left\{ f^{(k)}\right\} _{k=1}^{\infty }$ satisfies the
infinite quantum Boltzmann hierarchy (\ref{hierarchy:QBHierarchy in
differential form}), regardless of the form of the initial datum $\left\{
f_{N}^{(k)}\left( 0\right) \right\} _{k=1}^{N}$ $\ $or the statistics
(Bose-Einstein / Fermi-Dirac / Maxwell-Boltzmann) it satisfies. In
particular, $f^{(k)}$ does not satisfy the infinite Uehling-Uhlenbeck
hierarchy (\ref{hierarchy:U-U}).
\end{theorem}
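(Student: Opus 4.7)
The plan is to recast the differential BBGKY hierarchy (\ref{hierarchy:QB-BBGKY in Differential Form}) in its mild (Duhamel) form using the free transport semigroup $S^{(k)}(t)=e^{-t\mathbf{v}_{k}\cdot\nabla_{\mathbf{x}_{k}}}$, and then perform one Born-type iteration by substituting the analogous Duhamel expression for $f_{N}^{(k+1)}(s)$ inside the $B_{\varepsilon}^{(k+1)}$ term. Since the bare coefficients $\varepsilon^{-1/2}$ and $N\varepsilon^{-1/2}=\varepsilon^{-7/2}$ both diverge, neither $\frac{1}{\sqrt{\varepsilon}}A_{\varepsilon}^{(k)}f_{N}^{(k)}$ nor $\frac{N}{\sqrt{\varepsilon}}B_{\varepsilon}^{(k+1)}f_{N}^{(k+1)}$ has a limit alone; only after one iteration do the oscillatory phases $e^{ih\cdot(x_i-x_j)/\varepsilon}$ and $e^{ih'\cdot(x_{j'}-x_{k+2})/\varepsilon}$ balance these singular prefactors and produce a finite object, which I then want to identify with $\sum_{j=1}^{k}C_{j,k+1}f^{(k+1)}$ of (\ref{def:collision operator}).

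After iteration, the integrand is a sum of four nested structures of schematic types $A^{\varepsilon}A^{\varepsilon}$, $A^{\varepsilon}B^{\varepsilon}$, $B^{\varepsilon}A^{\varepsilon}$, and $B^{\varepsilon}B^{\varepsilon}$, each carrying an overall factor $\varepsilon^{-1}$ (together with the volume factors $N/\sqrt{\varepsilon}$). I would test the iterated identity against a Schwartz function $\varphi(\mathbf{x}_{k},\mathbf{v}_{k})$ and, using the $W^{4,1}_k$ bound of hypothesis (2), justify interchanging the $N\to\infty$ limit with the spatial, velocity, and Fourier integrals. The vanishing of $\hat{\phi}$ at the origin to order $11$ is the key quantitative input: it permits Taylor-expanding $\hat{\phi}(h)=O(|h|^{11})$ inside each oscillatory integral and then trading surplus powers of $|h|$ for $x$-derivatives of $f_{N}$ via the identity $h\, e^{ih\cdot x/\varepsilon}=-i\varepsilon\,\nabla_{x}e^{ih\cdot x/\varepsilon}$, which converts $\varepsilon^{-1}$ into positive powers of $\varepsilon$ at the price of at most four position/velocity derivatives of $f_{N}$ (controlled uniformly by the $W_{k}^{4,1}$ hypothesis). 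This forces all of the $A^{\varepsilon}A^{\varepsilon}$, $A^{\varepsilon}B^{\varepsilon}$, and $B^{\varepsilon}A^{\varepsilon}$ pieces to tend to $0$. Only the $B^{\varepsilon}B^{\varepsilon}$ piece contains two truly independent fast phases whose joint stationary-phase localization produces the energy--momentum conservation constraints $\delta(v_{j}+v_{k+1}-v'_{j}-v'_{k+1})\,\delta(\tfrac12(v_{j}^{2}+v_{k+1}^{2}-(v'_{j})^{2}-(v'_{k+1})^{2}))$; a Jacobian computation converts the $dh$ integration into the $dS_{\omega}$ integration of (\ref{def:collision operator}) and reproduces the cross section $\tfrac{1}{8\pi^{2}}|\omega\cdot(v_{j}-v_{k+1})|\,|\hat{\phi}((\omega\cdot(v_{j}-v_{k+1}))\omega)|^{2}$.

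The decisive point, and the reason the limit is the Maxwell--Boltzmann hierarchy (\ref{hierarchy:QBHierarchy in differential form}) rather than the Uehling--Uhlenbeck hierarchy (\ref{hierarchy:U-U}), is that the three-particle operator $Q_{2,j,k+2}$ would require the "diagonal" contraction $x_{k+2}\mapsto x_{j}$ arising from two $B^{\varepsilon}$ vertices that share a particle index, and the $W^{4,1}$ regularity is exactly enough to force one more oscillatory phase to degenerate, producing one additional power of $\varepsilon$ and making this contribution $o(1)$. Likewise, every statistics-sensitive cross term, whose appearance would signal a $\theta$-dependent $|\hat{\phi}(v'-v)+\theta\hat{\phi}(v'-v_{*})|^{2}$ cross section, is of the $A^{\varepsilon}B^{\varepsilon}$ or $B^{\varepsilon}A^{\varepsilon}$ type and is killed by the same integration-by-parts gain. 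The main obstacle, both technically and conceptually, is precisely this last point: one must show that the would-be $Q_{2}$ and would-be quantum-statistical contributions are genuinely $o(1)$ in $N$, uniformly in $k\le $ test support, not merely individually bounded; this is where the vanishing of $\hat{\phi}$ to eleventh order and the four velocity/spatial derivatives enter in a non-negotiable way, and it is what pins down the threshold regularity $W^{4,1}$ advertised in the abstract. Once this quantitative smallness is established, summing over $j=1,\dots,k$ of the surviving $B^{\varepsilon}B^{\varepsilon}$ limit yields exactly $\sum_{j=1}^{k}C_{j,k+1}f^{(k+1)}$, and passing to the limit in the Duhamel identity closes the proof.
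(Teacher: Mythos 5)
There is a genuine structural error: you have the roles of the two iterated compositions exactly reversed. After one Duhamel iteration the paper obtains (for $k=1$) the term $IV=\frac{N}{\varepsilon}\int S^{(1)}B^{(2)}_{\varepsilon}\int S^{(2)}A^{(2)}_{\varepsilon}f_{N}^{(2)}$, of type $B^{\varepsilon}A^{\varepsilon}$ acting on the \emph{two}-particle marginal, and the term $V=\frac{N^{2}}{\varepsilon}\int S^{(1)}B^{(2)}_{\varepsilon}\int S^{(2)}B^{(3)}_{\varepsilon}f_{N}^{(3)}$, of type $B^{\varepsilon}B^{\varepsilon}$ acting on the \emph{three}-particle marginal. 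It is $IV$ that survives and produces the Boltzmann collision operator $C_{1,2}f^{(2)}$ (the energy delta $\delta((v_1-v_2)\cdot\omega+r)$ comes from the rescaled time integral $\operatorname{Re}\int_{0}^{\infty}e^{-iu[\cdot]}du=\pi\delta(\cdot)$, not from a joint stationary phase of two $B$ vertices), and it is $V$ that tends to zero. Your proposal claims the opposite: that the $B^{\varepsilon}A^{\varepsilon}$ piece is killed and that $\sum_j C_{j,k+1}f^{(k+1)}$ emerges from the $B^{\varepsilon}B^{\varepsilon}$ piece. A particle-number count already rules this out: $C_{j,k+1}$ acts on $f^{(k+1)}$, and only the $B^{\varepsilon}A^{\varepsilon}$ composition stays at $k+1$ particles; the $B^{\varepsilon}B^{\varepsilon}$ composition involves $f^{(k+2)}$ and is precisely the structure of the would-be Uehling--Uhlenbeck cubic term $Q_{2,j,k+2}$. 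If your claimed vanishing of the $B^{\varepsilon}A^{\varepsilon}$ piece were correct, the limit would contain no collision operator at all and the theorem would be false.

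The mechanism you invoke for the vanishing is also not the right one. The eleventh-order vanishing of $\hat{\phi}$ at the origin is \emph{not} used to Taylor-expand $\hat{\phi}(h)=O(|h|^{11})$ and trade powers of $|h|$ for powers of $\varepsilon$; in the paper it enters only in \S 2.3, to control the singularity $|h_{2}|^{-(4-n+2m)}$ produced by repeated integration by parts when justifying $\lim_{\varepsilon\to 0}\int=\int\lim_{\varepsilon\to 0}$ under the $W^{4,1}$ hypothesis. The cubic term $V$ dies for a much more elementary reason: after the rescalings $t_{3}=t_{2}-\varepsilon s_{1}$, $h_{1}=\varepsilon\xi_{1}-h_{2}$, $x_{3}=x_{2}-\varepsilon x_{3,\mathrm{new}}$ and passage to the limit, the $dx_{3}$ integration produces a factor $\delta(h_{2})$, so the integrand is evaluated at $|\hat{\phi}(0)|^{2}=0$ (and would diverge like $\int_{0}^{\infty}ds_{1}$ if $\hat{\phi}(0)\neq 0$). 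You would need to redo the computation with the correct assignment of roles: extract the cross section $\frac{1}{8\pi^{2}}|\omega\cdot(v_{1}-v_{2})|\,|\hat{\phi}((\omega\cdot(v_{1}-v_{2}))\omega)|^{2}$ from $IV$ by explicit change of variables, spherical coordinates, and the Heaviside-function identity, and show $V\to 0$ via the $\delta(h_{2})$ mechanism.
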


We remark that Theorem \ref{THM:MainTheorem} certainly does not imply that
the Uehling-Uhlenbeck equation (\ref{eqn:U-U}) is not derivable as a
mean-field limit. Our result is merely an indication that, in order to
derive the Uehling-Uhlenbeck equation, one must work with per-particle
regularity bound below $W^{4,1}$. It is certainly an interesting question to
lower the regularity requirement of Theorem \ref{THM:MainTheorem}. But we
are not able to do so currently.

Before delving into the proof of Theorem \ref{THM:MainTheorem}, we would
like to discuss the assumptions of Theorem \ref{THM:MainTheorem}. First of
all, not only we are not specifying the statistics $\left\{
f_{N}^{(k)}\right\} $ satisify, we are not assuming any statistics or
symmetric conditions on the limit $f^{(k)}$ either. Moreover, we do not need
\thinspace $f_{N}^{(k)}\left( t\right) $ or $f^{(k)}(t)$ to take a special
form, e.g. tensor product form or quasi free form, to make Theorem \ref%
{THM:MainTheorem} to hold. Compared with the work by King \cite{King} and
Landford \cite{Lanford}\footnote{%
See also \cite{SaintRaymond}.} on deriving the classical Boltzmann equation
from models with hard spheres collision and singular potentials, the
interparticle interaction $\phi $ we are considering here, is smooth, and
hence the regularity assumption in Theorem \ref{THM:MainTheorem} is not
impossible. The proof of Theorem \ref{THM:MainTheorem} suggests that the
assumption $\int \phi =0$ or $\hat{\phi}(0)=0$ might actually be a necessary
condition such that the quantum BBGKY hierarchy (\ref{hierarchy:QB-BBGKY in
Differential Form}) has a $N\rightarrow \infty $ limit. See \S \ref%
{Sec:NecessaryCondition} for a discussion. For completeness, we include, in
the appendix, a discussion about the cubic term of the Uehling-Uhlenbeck
equation (\ref{eqn:U-U}) when $\hat{\phi}(0)=0$.

\subsection{Acknowledgement}

The first author would like to thank P. Germain, E. Lieb, B. Schlein, C.
Sulem, and J. Yngvason for discussions related to this work.

\section{Proof of the Main Theorem}

For notational simplicity, it suffices to prove the main theorem for $k=1$
and with the assumption that the whole sequence $\left\{ \Gamma _{N}=\left\{
f_{N}^{(k)}\right\} _{k=1}^{N}\right\} _{N}$ has only one limit point. Our
goal is to prove the absence of cubic Uehling-Uhlenbeck terms in the limit.
Let $S^{(k)}(t)$ be the solution operator to the equation%
\begin{equation*}
\left( \partial _{t}+\mathbf{v}_{k}\cdot \nabla _{\mathbf{x}_{k}}\right)
f^{(k)}=0.
\end{equation*}%
We will prove that every limit point $\Gamma =\left\{ f^{(k)}\right\}
_{k=1}^{\infty }$ of $\left\{ \Gamma _{N}=\left\{ f_{N}^{(k)}\right\}
_{k=1}^{N}\right\} _{N}$ in the sense of Theorem \ref{THM:MainTheorem}
satisfies 
\begin{eqnarray}
&&\int J(x_{1},v_{1})f^{(1)}(t_{1},x_{1},v_{1})dx_{1}dv_{1}
\label{target:test with a test function} \\
&=&\int J(x_{1},v_{1})S^{(1)}f^{(1)}(0,x_{1},v_{1})dx_{1}dv_{1}  \notag \\
&&+\int J(x_{1},v_{1})\left(
\int_{0}^{t_{1}}S^{(1)}(t_{1}-t_{2})C_{1,2}f^{(2)}(t_{2},x_{1},v_{1})dt_{2}%
\right) dx_{1}dv_{1},  \notag
\end{eqnarray}%
for all real test function $J(x_{1},v_{1})$.

To this end, we use the BBGKY hierarchy $\left( \ref{hierarchy:QB-BBGKY in
Differential Form}\right) .$Write hierarchy $\left( \ref{hierarchy:QB-BBGKY
in Differential Form}\right) $ in integral form, we have%
\begin{eqnarray}
f_{N}^{(k)}\left( t_{k}\right) &=&S^{(k)}\left( t_{k}\right)
f_{N}^{(k)}\left( 0\right) +\int_{0}^{t_{k}}S^{(k)}(t_{k}-t_{k+1})\frac{1}{%
\sqrt{\varepsilon }}A_{\varepsilon }^{(k)}f_{N}^{(k)}(t_{k+1})dt_{k+1}
\label{hierarchy:QB-BBGKY in Integral Form} \\
&&+\int_{0}^{t_{k}}S^{(k)}(t_{k}-t_{k+1})\frac{N}{\sqrt{\varepsilon }}%
B_{\varepsilon }^{(k+1)}f_{N}^{(k+1)}(t_{k+1})dt_{k+1}.  \notag
\end{eqnarray}%
Iterate hierarchy (\ref{hierarchy:QB-BBGKY in Integral Form}) once and get to%
\begin{equation}
f_{N}^{(1)}\left( t_{1}\right) =I+II+III+IV+V,
\label{hierarchy:interateBBGKYonce}
\end{equation}%
where%
\begin{equation*}
I=S^{(1)}\left( t_{1}\right) f_{N}^{(1)}\left( 0\right) ,
\end{equation*}%
\begin{equation*}
II=\frac{1}{\sqrt{\varepsilon }}\int_{0}^{t_{1}}S^{(1)}(t_{1}-t_{2})A_{%
\varepsilon }^{(1)}f_{N}^{(1)}(t_{2})dt_{2},
\end{equation*}%
\begin{equation*}
III=\frac{N}{\sqrt{\varepsilon }}\int_{0}^{t_{1}}S^{(1)}(t_{1}-t_{2})B_{%
\varepsilon }^{(2)}S^{(2)}\left( t_{2}\right) f_{N}^{(2)}\left( 0\right)
dt_{2},
\end{equation*}%
\begin{equation*}
IV=\frac{N}{\varepsilon }\int_{0}^{t_{1}}S^{(1)}(t_{1}-t_{2})B_{\varepsilon
}^{(2)}\int_{0}^{t_{2}}S^{(2)}(t_{2}-t_{3})A_{\varepsilon
}^{(2)}f_{N}^{(2)}(t_{3})dt_{3}dt_{2},
\end{equation*}%
\begin{equation*}
V=\frac{N^{2}}{\varepsilon }\int_{0}^{t_{1}}S^{(1)}(t_{1}-t_{2})B_{%
\varepsilon }^{(2)}\int_{0}^{t_{2}}S^{(2)}(t_{2}-t_{3})B_{\varepsilon
}^{(3)}f_{N}^{(3)}(t_{3})dt_{3}dt_{2}.
\end{equation*}

On the one hand, iterating hierarchy (\ref{hierarchy:QB-BBGKY in Integral
Form}) once gives the terms which are quadratic in $\phi $ and hence are the
central part of the quantum Boltzmann hierarchy (\ref{hierarchy:QBHierarchy
in differential form}) and the Uehling-Uhlenbeck hierarchy (\ref%
{hierarchy:U-U}). On the other hand, we remark that one will not obtain the
infinite Uehling-Uhlenbeck hierarchy (\ref{hierarchy:U-U}) corresponding to
the Uehling-Uhlenbeck equation (\ref{eqn:U-U}) even one iterates (\ref%
{hierarchy:QB-BBGKY in Integral Form}) more than once and then considers its
limit as $\varepsilon \rightarrow 0$. The easiest way to see this is to
notice that the new terms will not be quadratic in $\phi .$

If one believes the mean-field limit 
\begin{equation*}
f_{N}^{(k)}\left( t,\mathbf{x}_{j},\mathbf{v}_{j}\right) \sim
\dprod\limits_{j=1}^{k}f(t,x_{j},v_{j})
\end{equation*}%
where $f$ satisfies some mean-field equation, then in the $\varepsilon
\rightarrow 0$ limit, $IV$ in (\ref{hierarchy:interateBBGKYonce}) will
generate a nonlinearity which is quadratic in $f$ and $\phi $ in the
mean-field equation, and $V$ in (\ref{hierarchy:interateBBGKYonce}) will
produce a term which is cubic in $f$ and quadratic in $\phi $. With the
above discussion in mind, alert reader can immediately tell that the main
part of the proof of Theorem \ref{THM:MainTheorem} is proving that the
Boltzmann collision operator $C_{j,k+1}$ defined in (\ref{def:collision
operator}) arises as the $\varepsilon \rightarrow 0$ limit of $IV,$ and the $%
\varepsilon \rightarrow 0$ limit of $V$ is zero and thus there is no
Uehling-Uhlenbeck term in the limit.

Since $f_{N}^{(k)}\rightarrow f^{(k)}$ in the sense stated in the main
theorem (Theorem \ref{THM:MainTheorem}), we know by definition that%
\begin{eqnarray*}
\lim_{N\rightarrow \infty }\int
J(x_{1},v_{1})f_{N}^{(1)}(t_{1},x_{1},v_{1})dx_{1}dv_{1} &=&\int
J(x_{1},v_{1})f^{(1)}(t_{1},x_{1},v_{1})dx_{1}dv_{1}, \\
\lim_{N\rightarrow \infty }\int J(x_{1},v_{1})S^{(1)}\left( t_{1}\right)
f_{N}^{(1)}\left( 0,x_{1},v_{1}\right) dx_{1}dv_{1} &=&\int
J(x_{1},v_{1})S^{(1)}\left( t_{1}\right) f^{(1)}\left( 0,x_{1},v_{1}\right)
dx_{1}dv_{1}.
\end{eqnarray*}%
Moreover, it has been shown in \cite{BCEP1,BCEP4} that the terms $II$ and $%
III$ tend to zero as $\varepsilon \rightarrow 0$. We are left to prove the
emergence of the quardratic collision kernel $C_{j,k}$ from $IV$ and the
possible cubic term $V$ is in fact zero as $\varepsilon \rightarrow 0.$

\subsection{Emergence of the Quardratic Collision Kernel\label%
{Sec:QuadraticTerm}}

$IV$ is the most important term since it contributes (\ref{def:collision
operator}) in the limit. Recall $IV$%
\begin{equation*}
IV=\frac{N}{\varepsilon }\int_{0}^{t_{1}}S^{(1)}(t_{1}-t_{2})B_{1,2}^{%
\varepsilon }\int_{0}^{t_{2}}S^{(2)}(t_{2}-t_{3})A_{1,2}^{\varepsilon
}f_{N}^{(2)}(t_{3})dt_{3}dt_{2}.
\end{equation*}%
We write%
\begin{equation*}
C_{1,2}^{\varepsilon }f_{N}^{(2)}=\frac{N}{\varepsilon }B_{1,2}^{\varepsilon
}\int_{0}^{t_{2}}S^{(2)}(t_{2}-t_{3})A_{1,2}^{\varepsilon
}f_{N}^{(2)}(t_{3})dt_{3}.
\end{equation*}%
We would like to prove 
\begin{eqnarray}
&&\lim_{N\rightarrow \infty }\int J(x_{1},v_{1})\left(
\int_{0}^{t_{1}}S^{(1)}(t_{1}-t_{2})C_{1,2}^{\varepsilon
}f_{N}^{(2)}(t_{2},x_{1},v_{1})dt_{2}\right) dx_{1}dv_{1}
\label{limit:quadratic term with S} \\
&=&\int J(x_{1},v_{1})\left(
\int_{0}^{t_{1}}S^{(1)}(t_{1}-t_{2})C_{1,2}f^{(2)}(t_{2},x_{1},v_{1})dt_{2}%
\right) dx_{1}dv_{1},  \notag
\end{eqnarray}%
and hence obtain the quardratic collision kernel which is the rightmost term
in (\ref{target:test with a test function}). Notice that $%
S^{(1)}(t_{2}-t_{1})J(x_{1},v_{1})$ is simply another test function for all $%
t_{1}$ and $t_{2}$. Hence, to establish (\ref{limit:quadratic term with S}),
it suffices to prove the following proposition.

\begin{proposition}
\label{Proposition:QuadraticTerm}Under the assumptions in Theorem \ref%
{THM:MainTheorem}, we have 
\begin{equation*}
\lim_{\varepsilon \rightarrow 0}\int J(x_{1},v_{1})C_{1,2}^{\varepsilon
}f_{N}^{(2)}(t_{2},x_{1},v_{1})dx_{1}dv_{1}=\int
J(x_{1},v_{1})C_{1,2}f^{(2)}(t_{2},x_{1},v_{1})dx_{1}dv_{1}.
\end{equation*}
\end{proposition}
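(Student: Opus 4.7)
The plan is to expand $C_{1,2}^\varepsilon f_N^{(2)}$ as a multi-fold oscillatory integral and extract the Boltzmann collision kernel by a weak-coupling rescaling. Substituting the definitions of $A_{1,2}^\varepsilon$, $B_{1,2}^\varepsilon$, and the free transport $S^{(2)}$, one writes $C_{1,2}^\varepsilon f_N^{(2)}(t_2,x_1,v_1)$ as an integral over $(x_2, v_2, h, h', s)$ with $s = t_2-t_3$, summed against signs $(\sigma,\sigma')\in\{\pm 1\}^2$, with prefactor of order $-N/((2\pi)^6\varepsilon) = -\varepsilon^{-4}/(2\pi)^6$, weight $\hat\phi(h)\hat\phi(h')$, velocity shifts $v_1\mapsto v_1 - \sigma h'/2 - \sigma' h/2$ and $v_2\mapsto v_2 + \sigma h'/2 + \sigma' h/2$ in the argument of $f_N^{(2)}(t_3)$, and combined oscillatory phase
\[
\Phi/\varepsilon \;=\; \bigl[(h+h')(x_1-x_2) \;+\; h\cdot(v_2-v_1+\sigma h')\, s\bigr]/\varepsilon.
\]

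I would then perform two rescalings: $s = \varepsilon u$ (turning $ds$ into $\varepsilon\,du$ on $u\in[0,t_2/\varepsilon]$ and taming the $s/\varepsilon$ oscillation), and $h' = -h + \varepsilon\eta$ (contributing $\varepsilon^3\,d\eta$ and taming the residual $(h+h')x_2/\varepsilon$ oscillation). The three $\varepsilon$-powers absorb the prefactor exactly: $\varepsilon^{-4}\cdot\varepsilon\cdot\varepsilon^{3}=1$. As $\varepsilon\to 0$, the phase becomes $\eta\cdot(x_1-x_2) + [h\cdot(v_2-v_1) - \sigma|h|^2]u$; the $x_2$-and-$\eta$ integration is a Fourier inversion forcing $x_2=x_1$; and the $u$-integral produces the Fermi's golden rule distribution
\[
\int_0^\infty e^{i[h\cdot(v_2-v_1)-\sigma|h|^2]u}\,du \;=\; \pi\,\delta\bigl(h\cdot(v_2-v_1)-\sigma|h|^2\bigr) \;+\; i\,\mathrm{PV}\,\frac{1}{h\cdot(v_2-v_1)-\sigma|h|^2}.
\]

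The principal-value contributions cancel upon summing over $(\sigma,\sigma')\in\{\pm 1\}^2$: the substitution $h\to-h$, using that $\hat\phi$ is real and even (hence $|\hat\phi(h)|^2$ is invariant), swaps $(+,-)\leftrightarrow(-,+)$ and $(+,+)\leftrightarrow(-,-)$ while flipping the sign of the denominator, so the PV integrands pair-cancel. For the surviving $\delta$ term, polar coordinates $h=r\omega$ with $r>0,\omega\in S^2$ reduce $\delta(r[\omega\cdot(v_2-v_1)-\sigma r])$ (together with the $r^2\,dr$ Jacobian) to an integral over $S^2$ at $r=\sigma\,\omega\cdot(v_2-v_1)>0$, producing $|\omega\cdot(v_2-v_1)|\,|\hat\phi((\omega\cdot(v_2-v_1))\omega)|^2$. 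Using the identity $v_1+\sigma h = v_1-(\omega\cdot(v_1-v_2))\omega = v_1'$ at the on-shell value of $h$, the four $(\sigma,\sigma')$ terms reassemble into the standard Boltzmann gain-minus-loss integrand $f^{(2)}(\ldots,v_1',v_2')-f^{(2)}(\ldots,v_1,v_2)$, evaluated at $x_2=x_1$; all numerical constants collapse to $1/(8\pi^2)$, and the hypothesis $f_N^{(2)}\to f^{(2)}$ in $C([0,T];W_2^{4,1})$ then delivers $C_{1,2}f^{(2)}$ in the limit.

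The main obstacle is justifying the interchange of $\lim_{\varepsilon\to 0}$ with the iterated oscillatory integral, especially the $u$-integration over the stretching window $[0,t_2/\varepsilon]$, which is not absolutely convergent. I would use the uniform $W^{4,1}$ per-particle bound from the main theorem to integrate by parts twice in $u$ against the velocity-smoothness of $f_N^{(2)}$; combined with the Schwartz decay of $\hat\phi$ and the $(1+|\eta|)^{-4}$ decay of the $x_2$-Fourier transform of $f_N^{(2)}$ (integrable on $\mathbb{R}^3$ since $4>3$), this furnishes an $\varepsilon$-uniform integrable majorant and dominated convergence closes the argument. The high-order vanishing of $\hat\phi$ at the origin, although indispensable for killing the cubic term $V$, additionally ensures that small-$h$ singularities in the principal-value integrand cause no trouble here.
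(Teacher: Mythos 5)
Your proposal is correct and follows essentially the same route as the paper: your rescalings $s=\varepsilon u$ and $h'=-h+\varepsilon\eta$ are exactly the paper's substitutions $t_3=t_2-\varepsilon s_1$, $h_1=\varepsilon\xi_1-h_2$, and your Sokhotski--Plemelj cancellation of the principal-value parts under $h\to-h$ is the paper's observation that the four sign-terms pair into complex conjugates, so that only $2\,\mathrm{Re}\int_0^{\infty}e^{-iu\theta}\,du=2\pi\delta(\theta)$ survives before passing to polar coordinates. The interchange of $\lim_{\varepsilon\to0}$ with the integrals is likewise justified in the paper by repeated integration by parts against the uniform $W^{4,1}$ bound together with the decay and high-order vanishing of $\hat{\phi}$, as you indicate.
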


\begin{proof}
We prove the propopsed limit with a direct computation. We start by writing
out $C_{1,2}^{\varepsilon }f_{N}^{(2)}$ step by step. First,%
\begin{eqnarray*}
&&\int_{0}^{t_{2}}S^{(2)}(t_{2}-t_{3})A_{1,2}^{\varepsilon
}f_{N}^{(2)}(t_{3})dt_{3} \\
&=&\frac{\left( -i\right) }{\left( 2\pi \right) ^{3}}\sum_{\sigma _{2}=\pm
1}\sigma _{2}\int_{0}^{t_{2}}dt_{3}\int_{\mathbb{R}^{3}}dh_{2} \\
&&S^{(2)}(t_{2}-t_{3})e^{\frac{ih_{2}\cdot (x_{1}-x_{2})}{\varepsilon }}\hat{%
\phi}(h_{2})f_{N}^{(2)}\left( t_{3},x_{1},x_{2},v_{1}-\sigma _{2}\frac{h_{2}%
}{2},v_{2}+\sigma _{2}\frac{h_{2}}{2}\right) \\
&=&\frac{\left( -i\right) }{\left( 2\pi \right) ^{3}}\sum_{\sigma _{2}=\pm
1}\sigma _{2}\int_{0}^{t_{2}}dt_{3}\int_{\mathbb{R}^{3}}dh_{2}e^{\frac{%
ih_{2}\cdot (x_{1}-\left( t_{2}-t_{3}\right) v_{1}-x_{2}+\left(
t_{2}-t_{3}\right) v_{2})}{\varepsilon }}\hat{\phi}(h_{2}) \\
&&f_{N}^{(2)}\left( t_{3},x_{1}-\left( t_{2}-t_{3}\right) v_{1},x_{2}-\left(
t_{2}-t_{3}\right) v_{2},v_{1}-\sigma _{2}\frac{h_{2}}{2},v_{2}+\sigma _{2}%
\frac{h_{2}}{2}\right)
\end{eqnarray*}%
then%
\begin{eqnarray*}
&&B_{1,2}^{\varepsilon
}\int_{0}^{t_{2}}S^{(2)}(t_{2}-t_{3})A_{1,2}^{\varepsilon
}f^{(2)}(t_{3})dt_{3} \\
&=&\frac{\left( -i\right) ^{2}}{\left( 2\pi \right) ^{6}}\sum_{\sigma
_{1},\sigma _{2}=\pm 1}\sigma _{1}\sigma _{2}\int_{\mathbb{R}%
^{3}}dx_{2}\int_{\mathbb{R}^{3}}dv_{2}\int_{\mathbb{R}^{3}}dh_{1}e^{\frac{%
ih_{1}\cdot (x_{1}-x_{2})}{\varepsilon }}\hat{\phi}(h_{1}) \\
&&\int_{0}^{t_{2}}dt_{3}\int_{\mathbb{R}^{3}}dh_{2}e^{\frac{ih_{2}\cdot %
\left[ x_{1}-\left( t_{2}-t_{3}\right) \left( v_{1}-\sigma _{1}\frac{h_{1}}{2%
}\right) -x_{2}+\left( t_{2}-t_{3}\right) \left( v_{2}+\sigma _{1}\frac{h_{1}%
}{2}\right) \right] }{\varepsilon }}\hat{\phi}(h_{2}) \\
&&f_{N}^{(2)}(t_{3},x_{1}-\left( t_{2}-t_{3}\right) \left( v_{1}-\sigma _{1}%
\frac{h_{1}}{2}\right) ,x_{2}-\left( t_{2}-t_{3}\right) \left( v_{2}+\sigma
_{1}\frac{h}{2}\right) , \\
&&v_{1}-\sigma _{2}\frac{h_{2}}{2}-\sigma _{1}\frac{h_{1}}{2},v_{2}+\sigma
_{2}\frac{h_{2}}{2}+\sigma _{1}\frac{h_{1}}{2}).
\end{eqnarray*}%
Rearrange, we have%
\begin{eqnarray*}
&=&\frac{\left( -i\right) ^{2}}{\left( 2\pi \right) ^{6}}\sum_{\sigma
_{1},\sigma _{2}=\pm 1}\sigma _{1}\sigma _{2}\int_{\mathbb{R}%
^{3}}dx_{2}\int_{\mathbb{R}^{3}}dv_{2}\int_{0}^{t_{2}}dt_{3}\int_{\mathbb{R}%
^{3}}dh_{1}\int_{\mathbb{R}^{3}}dh_{2} \\
&&e^{\frac{ih_{1}\cdot (x_{1}-x_{2})}{\varepsilon }}e^{\frac{ih_{2}\cdot %
\left[ x_{1}-x_{2}-\left( t_{2}-t_{3}\right) \left( v_{1}-v_{2}-\sigma
_{1}h_{1}\right) \right] }{\varepsilon }}\hat{\phi}(h_{1})\hat{\phi}(h_{2})
\\
&&f_{N}^{(2)}(t_{3},x_{1}-\left( t_{2}-t_{3}\right) \left( v_{1}-\sigma _{1}%
\frac{h_{1}}{2}\right) ,x_{2}-\left( t_{2}-t_{3}\right) \left( v_{2}+\sigma
_{1}\frac{h_{1}}{2}\right) , \\
&&v_{1}-\sigma _{2}\frac{h_{2}}{2}-\sigma _{1}\frac{h_{1}}{2},v_{2}+\sigma
_{2}\frac{h_{2}}{2}+\sigma _{1}\frac{h_{1}}{2}).
\end{eqnarray*}%
So%
\begin{eqnarray*}
&&\int J(x_{1},v_{1})C_{1,2}^{\varepsilon
}f_{N}^{(2)}(t_{2},x_{1},v_{1})dx_{1}dv_{1} \\
&=&\frac{N}{\varepsilon }\frac{\left( -i\right) ^{2}}{\left( 2\pi \right)
^{6}}\sum_{\sigma _{1},\sigma _{2}=\pm 1}\sigma _{1}\sigma _{2}\int d\mathbf{%
x}_{2}\int d\mathbf{v}_{2}\int_{0}^{t_{2}}dt_{3}\int_{\mathbb{R}%
^{3}}dh_{1}\int_{\mathbb{R}^{3}}dh_{2} \\
&&J(x_{1},v_{1})e^{\frac{ih_{1}\cdot (x_{1}-x_{2})}{\varepsilon }}e^{\frac{%
ih_{2}\cdot \left[ x_{1}-x_{2}-\left( t_{2}-t_{3}\right) \left(
v_{1}-v_{2}-\sigma _{1}h_{1}\right) \right] }{\varepsilon }}\hat{\phi}(h_{1})%
\hat{\phi}(h_{2}) \\
&&f_{N}^{(2)}(t_{3},x_{1}-\left( t_{2}-t_{3}\right) \left( v_{1}-\sigma _{1}%
\frac{h_{1}}{2}\right) ,x_{2}-\left( t_{2}-t_{3}\right) \left( v_{2}+\sigma
_{1}\frac{h_{1}}{2}\right) , \\
&&v_{1}-\sigma _{2}\frac{h_{2}}{2}-\sigma _{1}\frac{h_{1}}{2},v_{2}+\sigma
_{2}\frac{h_{2}}{2}+\sigma _{1}\frac{h_{1}}{2}).
\end{eqnarray*}%
The $h_{1}$ and $h_{2}$ integrals are highly oscillatory. We change
variables to move the $h^{\prime }s$ away from $f_{N}^{(2)}:$ first the $x$
part,%
\begin{eqnarray*}
x_{1,new} &=&x_{1,old}-\left( t_{2}-t_{3}\right) \left( v_{1}-\sigma _{1}%
\frac{h_{1}}{2}\right) , \\
x_{2,new} &=&x_{2,old}-\left( t_{2}-t_{3}\right) \left( v_{2}+\sigma _{1}%
\frac{h_{1}}{2}\right) ,
\end{eqnarray*}%
which gives%
\begin{eqnarray*}
&&\int J(x_{1},v_{1})C_{1,2}^{\varepsilon
}f_{N}^{(2)}(t_{2},x_{1},v_{1})dx_{1}dv_{1} \\
&=&\frac{N}{\varepsilon }\frac{\left( -i\right) ^{2}}{\left( 2\pi \right)
^{6}}\sum_{\sigma _{1},\sigma _{2}=\pm 1}\sigma _{1}\sigma _{2}\int d\mathbf{%
x}_{2}\int d\mathbf{v}_{2}\int_{0}^{t_{2}}dt_{3}\int_{\mathbb{R}%
^{3}}dh_{1}\int_{\mathbb{R}^{3}}dh_{2} \\
&&J(x_{1}+\left( t_{2}-t_{3}\right) \left( v_{1}-\sigma _{1}\frac{h_{1}}{2}%
\right) ,v_{1}) \\
&&e^{\frac{ih_{1}\cdot (x_{1}-x_{2}+\left( t_{2}-t_{3}\right) \left(
v_{1}-v_{2}-\sigma _{1}h_{1}\right) )}{\varepsilon }}e^{\frac{ih_{2}\cdot
(x_{1}-x_{2})}{\varepsilon }}\hat{\phi}(h_{1})\hat{\phi}(h_{2}) \\
&&f_{N}^{(2)}\left( t_{3},x_{1},x_{2},v_{1}-\sigma _{2}\frac{h_{2}}{2}%
-\sigma _{1}\frac{h_{1}}{2},v_{2}+\sigma _{2}\frac{h_{2}}{2}+\sigma _{1}%
\frac{h_{1}}{2}\right) .
\end{eqnarray*}%
Then the $v$ part%
\begin{eqnarray*}
v_{1,new} &=&v_{1,old}-\sigma _{2}\frac{h_{2}}{2}-\sigma _{1}\frac{h_{1}}{2},
\\
v_{2,new} &=&v_{2,old}+\sigma _{2}\frac{h_{2}}{2}+\sigma _{1}\frac{h_{1}}{2},
\end{eqnarray*}%
which yields%
\begin{eqnarray*}
&&\int J(x_{1},v_{1})C_{1,2}^{\varepsilon
}f_{N}^{(2)}(t_{2},x_{1},v_{1})dx_{1}dv_{1} \\
&=&\frac{N}{\varepsilon }\frac{\left( -i\right) ^{2}}{\left( 2\pi \right)
^{6}}\sum_{\sigma _{1},\sigma _{2}=\pm 1}\sigma _{1}\sigma _{2}\int d\mathbf{%
x}_{2}\int d\mathbf{v}_{2}\int_{0}^{t_{2}}dt_{3}\int_{\mathbb{R}%
^{3}}dh_{1}\int_{\mathbb{R}^{3}}dh_{2} \\
&&J(x_{1}+\left( t_{2}-t_{3}\right) \left( v_{1}+\sigma _{1}\frac{h_{2}}{2}%
\right) ,v_{1}+\sigma _{2}\frac{h_{2}}{2}+\sigma _{1}\frac{h_{1}}{2}) \\
&&e^{\frac{ih_{1}\cdot (x_{1}-x_{2}+\left( t_{2}-t_{3}\right) \left(
v_{1}-v_{2}+\sigma _{2}h_{2}\right) )}{\varepsilon }}e^{\frac{ih_{2}\cdot
(x_{1}-x_{2})}{\varepsilon }}\hat{\phi}(h_{1})\hat{\phi}(h_{2})f_{N}^{(2)}%
\left( t_{3},x_{1},x_{2},v_{1},v_{2}\right) .
\end{eqnarray*}

To evaluate the above integral, we substitute like \cite[(2.15)]{BCEP1}%
\begin{equation*}
t_{3}=t_{2}-\varepsilon s_{1},\text{ \ \ \ \ \ \ \ }h_{1}=\varepsilon \xi
_{1}-h_{2},
\end{equation*}%
and have%
\begin{eqnarray*}
&&\int J(x_{1},v_{1})C_{1,2}^{\varepsilon
}f_{N}^{(2)}(t_{2},x_{1},v_{1})dx_{1}dv_{1} \\
&=&\frac{\varepsilon ^{4}}{\varepsilon ^{4}}\frac{\left( -i\right) ^{2}}{%
\left( 2\pi \right) ^{6}}\sum_{\sigma _{1},\sigma _{2}=\pm 1}\sigma
_{1}\sigma _{2}\int d\mathbf{x}_{2}\int d\mathbf{v}_{2}\int_{0}^{\frac{t_{2}%
}{\varepsilon }}ds_{1}\int_{\mathbb{R}^{3}}d\xi _{1}\int_{\mathbb{R}%
^{3}}dh_{2} \\
&&J(x_{1}+\varepsilon s_{1}\left( v_{1}+\sigma _{1}\frac{h_{2}}{2}\right)
,v_{1}+\sigma _{2}\frac{h_{2}}{2}+\sigma _{1}\frac{\left( \varepsilon \xi
_{1}-h_{2}\right) }{2}) \\
&&e^{\frac{i\left( \varepsilon \xi _{1}-h_{2}\right) \cdot
(x_{1}-x_{2}+\varepsilon s_{1}\left( v_{1}-v_{2}+\sigma _{2}h_{2}\right) )}{%
\varepsilon }}e^{\frac{ih_{2}\cdot (x_{1}-x_{2})}{\varepsilon }}\hat{\phi}%
(\varepsilon \xi _{1}-h_{2})\hat{\phi}(h_{2})f_{N}^{(2)}\left(
t_{2}-\varepsilon s_{1},x_{1},x_{2},v_{1},v_{2}\right)
\end{eqnarray*}%
which simplifies to%
\begin{eqnarray}
&=&\frac{\left( -i\right) ^{2}}{\left( 2\pi \right) ^{6}}\sum_{\sigma
_{1},\sigma _{2}=\pm 1}\sigma _{1}\sigma _{2}\int d\mathbf{x}_{2}\int d%
\mathbf{v}_{2}\int_{0}^{\frac{t_{2}}{\varepsilon }}ds_{1}\int_{\mathbb{R}%
^{3}}d\xi _{1}\int_{\mathbb{R}^{3}}dh_{2}
\label{eqn:quadratic term before limit} \\
&&J(x_{1}+\varepsilon s_{1}\left( v_{1}+\sigma _{1}\frac{h_{2}}{2}\right)
,v_{1}+\sigma _{2}\frac{h_{2}}{2}+\sigma _{1}\frac{\left( \varepsilon \xi
_{1}-h_{2}\right) }{2})\hat{\phi}(\varepsilon \xi _{1}-h_{2})\hat{\phi}%
(h_{2})  \notag \\
&&e^{i\xi _{1}\cdot \left( x_{1}-x_{2}\right) }e^{i\left( \varepsilon \xi
_{1}-h_{2}\right) \cdot s_{1}\left( v_{1}-v_{2}+\sigma _{2}h_{2}\right)
}f_{N}^{(2)}\left( t_{2}-\varepsilon s_{1},x_{1},x_{2},v_{1},v_{2}\right) . 
\notag
\end{eqnarray}%
Taking the $\varepsilon \rightarrow 0$ limit (justified in \S \ref%
{Sec:Interchange}), we arrive at 
\begin{eqnarray}
&&\lim_{\varepsilon \rightarrow 0}\int J(x_{1},v_{1})C_{1,2}^{\varepsilon
}f_{N}^{(2)}(t_{2},x_{1},v_{1})dx_{1}dv_{1}
\label{eqn:quardratic term after limit} \\
&=&\frac{\left( -i\right) ^{2}}{\left( 2\pi \right) ^{6}}\sum_{\sigma
_{1},\sigma _{2}=\pm 1}\sigma _{1}\sigma _{2}\int d\mathbf{x}_{2}\int d%
\mathbf{v}_{2}\int_{0}^{+\infty }ds_{1}\int_{\mathbb{R}^{3}}d\xi _{1}\int_{%
\mathbb{R}^{3}}dh_{2}  \notag \\
&&J(x_{1},v_{1}+\sigma _{2}\frac{h_{2}}{2}-\sigma _{1}\frac{h_{2}}{2})\hat{%
\phi}(-h_{2})\hat{\phi}(h_{2})  \notag \\
&&e^{i\xi _{1}\cdot \left( x_{1}-x_{2}\right) }e^{-ih_{2}\cdot s_{1}\left(
v_{1}-v_{2}+\sigma _{2}h_{2}\right) }f^{(2)}\left(
t_{2},x_{1},x_{2},v_{1},v_{2}\right)  \notag
\end{eqnarray}%
Using the fact that 
\begin{equation*}
\int_{\mathbb{R}^{3}}e^{i\xi _{1}\cdot \left( x_{1}-x_{2}\right) }d\xi
_{1}=\left( 2\pi \right) ^{3}\delta (x_{1}-x_{2}),
\end{equation*}%
(\ref{eqn:quardratic term after limit}) becomes 
\begin{eqnarray*}
&=&\frac{\left( -i\right) ^{2}}{\left( 2\pi \right) ^{3}}\sum_{\sigma
_{1},\sigma _{2}=\pm 1}\sigma _{1}\sigma _{2}\int d\mathbf{x}_{2}\int d%
\mathbf{v}_{2}\int_{0}^{+\infty }ds_{1}\int_{\mathbb{R}^{3}}dh_{2} \\
&&J(x_{1},v_{1}+\sigma _{2}\frac{h_{2}}{2}-\sigma _{1}\frac{h_{2}}{2})\hat{%
\phi}(-h_{2})\hat{\phi}(h_{2})\delta (x_{1}-x_{2}) \\
&&e^{-ih_{2}\cdot s_{1}\left( v_{1}-v_{2}+\sigma _{2}h_{2}\right)
}f^{(2)}\left( t_{2},x_{1},x_{2},v_{1},v_{2}\right) \\
&=&\frac{\left( -i\right) ^{2}}{\left( 2\pi \right) ^{3}}\sum_{\sigma
_{1},\sigma _{2}=\pm 1}\sigma _{1}\sigma _{2}\int dx_{1}\int d\mathbf{v}%
_{2}\int_{0}^{+\infty }ds_{1}\int_{\mathbb{R}^{3}}dh_{2} \\
&&J(x_{1},v_{1}+\sigma _{2}\frac{h_{2}}{2}-\sigma _{1}\frac{h_{2}}{2})\hat{%
\phi}(-h_{2})\hat{\phi}(h_{2})e^{-ih_{2}\cdot s_{1}\left( v_{1}-v_{2}+\sigma
_{2}h_{2}\right) }f^{(2)}\left( t_{2},x_{1},x_{1},v_{1},v_{2}\right)
\end{eqnarray*}%
Put in spherical coordinates for the $dh_{2}$ integration: we let $%
h_{2}=r\omega $, where $r\in \mathbb{R}^{+}$ and $\omega \in \mathbb{S}^{2},$
to get%
\begin{eqnarray*}
&&\frac{\left( -i\right) ^{2}}{\left( 2\pi \right) ^{3}}\sum_{\sigma
_{1},\sigma _{2}=\pm 1}\sigma _{1}\sigma _{2}\int dx_{1}\int d\mathbf{v}%
_{2}\int_{0}^{+\infty }ds_{1}\int_{0}^{\infty }r^{2}dr\int_{\mathbb{S}%
^{2}}dS_{\omega } \\
&&J(x_{1},v_{1}+\sigma _{2}\frac{r\omega }{2}-\sigma _{1}\frac{r\omega }{2}%
)\left\vert \hat{\phi}(r\omega )\right\vert ^{2}e^{-ir\omega \cdot
s_{1}\left( v_{1}-v_{2}+\sigma _{2}r\omega \right) }f^{(2)}\left(
t_{2},x_{1},x_{1},v_{1},v_{2}\right) .
\end{eqnarray*}%
Substitute $u=rs_{1}$, 
\begin{eqnarray}
&=&\frac{\left( -i\right) ^{2}}{\left( 2\pi \right) ^{3}}\sum_{\sigma
_{1},\sigma _{2}=\pm 1}\sigma _{1}\sigma _{2}\int dx_{1}\int d\mathbf{v}%
_{2}\int_{0}^{+\infty }du  \label{eqn:simplified quardratic term after limit}
\\
&&\int_{0}^{\infty }rdr\int_{\mathbb{S}^{2}}dS_{\omega }J(x_{1},v_{1}+\sigma
_{2}\frac{r\omega }{2}-\sigma _{1}\frac{r\omega }{2})\left\vert \hat{\phi}%
(r\omega )\right\vert ^{2}  \notag \\
&&e^{-iu\left[ \left( v_{1}-v_{2}\right) \cdot \omega +\sigma _{2}r\right]
}f^{(2)}\left( t_{2},x_{1},x_{1},v_{1},v_{2}\right) .  \notag
\end{eqnarray}

Write $J(x_{1},v_{1}+\sigma _{2}\frac{r\omega }{2}-\sigma _{1}\frac{r\omega 
}{2})=g(\left( \sigma _{2}-\sigma _{1}\right) \frac{r\omega }{2})$ for short
at the moment. Notice that in the middle of (\ref{eqn:simplified quardratic
term after limit}), we have%
\begin{eqnarray}
&&\sum_{\sigma _{1},\sigma _{2}=\pm 1}\sigma _{1}\sigma _{2}\left(
\int_{0}^{+\infty }du\int_{\mathbb{S}^{2}}dS_{\omega }g(\left( \sigma
_{2}-\sigma _{1}\right) \frac{r\omega }{2})\left\vert \hat{\phi}(r\omega
)\right\vert ^{2}e^{-iu\left[ \left( v_{1}-v_{2}\right) \cdot \omega +\sigma
_{2}r\right] }\right)  \label{eqn:surface measure} \\
&=&\left( \int_{0}^{+\infty }du\int_{\mathbb{S}^{2}}dS_{\omega
}g(0)\left\vert \hat{\phi}(r\omega )\right\vert ^{2}e^{-iu\left[ \left(
v_{1}-v_{2}\right) \cdot \omega +r\right] }\right)  \notag \\
&&-\left( \int_{0}^{+\infty }du\int_{\mathbb{S}^{2}}dS_{\omega }g(r\omega
)\left\vert \hat{\phi}(r\omega )\right\vert ^{2}e^{-iu\left[ \left(
v_{1}-v_{2}\right) \cdot \omega +r\right] }\right)  \notag \\
&&+\left( \int_{0}^{+\infty }du\int_{\mathbb{S}^{2}}dS_{\omega
}g(0)\left\vert \hat{\phi}(r\omega )\right\vert ^{2}e^{-iu\left[ \left(
v_{1}-v_{2}\right) \cdot \omega -r\right] }\right)  \notag \\
&&-\left( \int_{0}^{+\infty }du\int_{\mathbb{S}^{2}}dS_{\omega }g(-r\omega
)\left\vert \hat{\phi}(r\omega )\right\vert ^{2}e^{-iu\left[ \left(
v_{1}-v_{2}\right) \cdot \omega -r\right] }\right)  \notag \\
&=&A-B+C-D.  \notag
\end{eqnarray}%
Do the substitution, $\omega _{new}=-\omega _{old}$ in terms $C$ and $D$, we
then find that $C=\bar{A}$ and $D=\bar{B}.$ So (\ref{eqn:surface measure})
is actually%
\begin{eqnarray*}
&=&\int_{\mathbb{S}^{2}}dS_{\omega }\left( g(0)\left\vert \hat{\phi}(r\omega
)\right\vert ^{2}2\func{Re}\int_{0}^{+\infty }due^{-iu\left[ \left(
v_{1}-v_{2}\right) \cdot \omega +r\right] }\right) \\
&&-\int_{\mathbb{S}^{2}}dS_{\omega }\left( g(r\omega )\left\vert \hat{\phi}%
(r\omega )\right\vert ^{2}2\func{Re}\int_{0}^{+\infty }due^{-iu\left[ \left(
v_{1}-v_{2}\right) \cdot \omega +r\right] }\right) ,
\end{eqnarray*}%
where%
\begin{eqnarray*}
&&\func{Re}\int_{0}^{+\infty }due^{-iu\left[ \left( v_{1}-v_{2}\right) \cdot
\omega +r\right] }=\func{Re}\int_{-\infty }^{+\infty }due^{-iu\left[ \left(
v_{1}-v_{2}\right) \cdot \omega +r\right] }H(u) \\
&=&\func{Re}\hat{H}(\left[ \left( v_{1}-v_{2}\right) \cdot \omega +r\right]
)=\pi \delta \left( \left( v_{1}-v_{2}\right) \cdot \omega +r\right) .
\end{eqnarray*}%
if we denote the Heaviside function by $H$.

Putting the above computation of (\ref{eqn:surface measure}) into (\ref%
{eqn:simplified quardratic term after limit}), we have%
\begin{eqnarray*}
&&\lim_{\varepsilon \rightarrow 0}\int J(x_{1},v_{1})C_{1,2}^{\varepsilon
}f_{N}^{(2)}(t_{2},x_{1},v_{1})dx_{1}dv_{1} \\
&=&\frac{-2}{8\pi ^{2}}\int dx_{1}\int d\mathbf{v}_{2}\int_{0}^{\infty
}rdr\int_{\mathbb{S}^{2}}dS_{\omega }\left[ J(x_{1},v_{1})-J(x_{1},v_{1}+r%
\omega )\right] \\
&&\delta \left( \left( v_{1}-v_{2}\right) \cdot \omega +r\right) \left\vert 
\hat{\phi}(r\omega )\right\vert ^{2}f^{(2)}\left(
t_{2},x_{1},x_{1},v_{1},v_{2}\right)
\end{eqnarray*}%
Insert a Heaviside function $H(r)$ to do the $dr$ integral, 
\begin{eqnarray*}
&=&\frac{2}{8\pi ^{2}}\int dx_{1}\int d\mathbf{v}_{2}\int_{-\infty }^{\infty
}H(r)rdr\int_{\mathbb{S}^{2}}dS_{\omega } \\
&&\left( J(x_{1},v_{1}+r\omega )-J(x_{1},v_{1})\right) \\
&&\pi \delta \left( \left( v_{1}-v_{2}\right) \cdot \omega +r\right)
\left\vert \hat{\phi}(r\omega )\right\vert ^{2}f^{(2)}\left(
t_{2},x_{1},x_{1},v_{1},v_{2}\right) \\
&=&\frac{2}{8\pi ^{2}}\int dx_{1}\int d\mathbf{v}_{2}\int_{\mathbb{S}%
^{2}}dS_{\omega }\left( J(x_{1},v_{1}-\left[ \left( v_{1}-v_{2}\right) \cdot
\omega \right] \omega )-J(x_{1},v_{1})\right) \\
&&H(-\left( v_{1}-v_{2}\right) \cdot \omega )\left( -\left(
v_{1}-v_{2}\right) \cdot \omega \right) \left\vert \hat{\phi}(\left[ \left(
v_{1}-v_{2}\right) \cdot \omega \right] \omega )\right\vert
^{2}f^{(2)}\left( t_{2},x_{1},x_{1},v_{1},v_{2}\right)
\end{eqnarray*}%
That is%
\begin{eqnarray*}
&=&\frac{1}{8\pi ^{2}}\int dx_{1}\int d\mathbf{v}_{2}\int_{\mathbb{S}%
^{2}}dS_{\omega }\left( J(x_{1},v_{1}-\left[ \left( v_{1}-v_{2}\right) \cdot
\omega \right] \omega )-J(x_{1},v_{1})\right) \\
&&\left\vert \left( v_{1}-v_{2}\right) \cdot \omega \right\vert \left\vert 
\hat{\phi}(\left[ \left( v_{1}-v_{2}\right) \cdot \omega \right] \omega
)\right\vert ^{2}f^{(2)}\left( t_{2},x_{1},x_{1},v_{1},v_{2}\right) ,
\end{eqnarray*}%
which is exactly%
\begin{equation*}
\int J(x_{1},v_{1})C_{1,2}f^{(2)}(t_{2},x_{1},v_{1})dx_{1}dv_{1}.
\end{equation*}%
Whence we conclude the proof of Proposition \ref{Proposition:QuadraticTerm}.
\end{proof}

\subsection{The Cubic Term is Zero\label{Sec:CubicTerm}}

Here, we investigate the limit of 
\begin{equation*}
V=\frac{N^{2}}{\varepsilon }\int_{0}^{t_{1}}S^{(1)}(t_{1}-t_{2})B_{%
\varepsilon }^{(2)}\int_{0}^{t_{2}}S^{(2)}(t_{2}-t_{3})B_{\varepsilon
}^{(3)}f_{N}^{(3)}(t_{3})dt_{3}dt_{2}.
\end{equation*}%
We write%
\begin{eqnarray*}
Q_{1,3}^{\varepsilon }f_{N}^{(3)} &=&\frac{N^{2}}{\varepsilon }%
B_{1,2}^{\varepsilon }\int_{0}^{t_{2}}S^{(2)}(t_{2}-t_{3})\left(
B_{1,3}^{\varepsilon }+B_{2,3}^{\varepsilon }\right) f_{N}^{(3)}(t_{3})dt_{3}
\\
&=&Q_{1,3}^{\varepsilon ,1}f_{N}^{(3)}+Q_{1,3}^{\varepsilon ,2}f_{N}^{(3)}.
\end{eqnarray*}%
If the $\varepsilon \rightarrow 0$ limit of $Q_{1,3}^{\varepsilon
}f_{N}^{(3)}$ is nonzero, it will correspond to a cubic nonlinearity in the
mean-field equation. On the one hand, as we remarked earlier in the paper,
for the Uehling-Uhlenbeck equation (\ref{eqn:U-U}) to rise as the mean-field
equation, $\lim_{\varepsilon \rightarrow 0}Q_{1,3}^{\varepsilon }f_{N}^{(3)}$
must not be zero. On the other hand, $\lim_{\varepsilon \rightarrow
0}Q_{1,3}^{\varepsilon }f_{N}^{(3)}$ has to be zero for Theorem \ref%
{THM:MainTheorem} to hold. Hence, we compute $\lim_{\varepsilon \rightarrow
0}Q_{1,3}^{\varepsilon ,1}f_{N}^{(3)}$ and $\lim_{\varepsilon \rightarrow
0}Q_{1,3}^{\varepsilon ,2}f_{N}^{(3)}$ in complete detail.

\subsubsection{Treatment of $Q_{1,3}^{\protect\varepsilon ,1}f_{N}^{(3)}$}

We prove that the limit%
\begin{equation*}
\lim_{\varepsilon \rightarrow 0}\int J(x_{1},v_{1})Q_{1,3}^{\varepsilon
,1}f_{N}^{(3)}(t_{2})dx_{1}dv_{1}=0
\end{equation*}%
by direct computation. Since the proposed limit is zero, we drop the
prefactor $\frac{\left( -i\right) }{\left( 2\pi \right) ^{3}}$ in $%
B_{\varepsilon }$ so that we do not need to keep track of it. We write%
\begin{eqnarray*}
&&\int_{0}^{t_{2}}S^{(2)}(t_{2}-t_{3})B_{1,3}^{\varepsilon
}f_{N}^{(3)}(t_{3})dt_{3} \\
&=&\sum_{\sigma _{2}=\pm 1}\sigma _{2}\int_{0}^{t_{2}}dt_{3}\int_{\mathbb{R}%
^{3}}dx_{3}\int_{\mathbb{R}^{3}}dv_{3}\int_{\mathbb{R}^{3}}dh_{2} \\
&&S^{(2)}(t_{2}-t_{3})e^{\frac{ih_{2}\cdot (x_{1}-x_{3})}{\varepsilon }}\hat{%
\phi}(h_{2})f_{N}^{(3)}\left( t_{3},x_{1},x_{2},x_{3},v_{1}-\sigma _{2}\frac{%
h_{2}}{2},v_{2},v_{3}+\sigma _{2}\frac{h_{2}}{2}\right) .
\end{eqnarray*}%
Different from the quadratic term treated in \S \ref{Sec:QuadraticTerm}, $%
S^{(2)}$ has no effect on $(x_{3},v_{3})$, so it becomes 
\begin{eqnarray*}
&=&\sum_{\sigma _{2}=\pm 1}\sigma _{2}\int_{0}^{t_{2}}dt_{3}\int_{\mathbb{R}%
^{3}}dx_{3}\int_{\mathbb{R}^{3}}dv_{3}\int_{\mathbb{R}^{3}}dh_{2}e^{\frac{%
ih_{2}\cdot \left[ \left( x_{1}-(t_{2}-t_{3})v_{1}\right) -x_{3}\right] }{%
\varepsilon }}\hat{\phi}(h_{2}) \\
&&\times f_{N}^{(3)}\left(
t_{3},x_{1}-(t_{2}-t_{3})v_{1},x_{2}-(t_{2}-t_{3})v_{2},x_{3},v_{1}-\sigma
_{2}\frac{h_{2}}{2},v_{2},v_{3}+\sigma _{2}\frac{h_{2}}{2}\right) .
\end{eqnarray*}%
Then%
\begin{eqnarray*}
&&\frac{N^{2}}{\varepsilon }B_{1,2}^{\varepsilon
}\int_{0}^{t_{2}}S^{(2)}(t_{2}-t_{3})B_{1,3}^{\varepsilon
}f_{N}^{(3)}(t_{3})dt_{3} \\
&=&\frac{N^{2}}{\varepsilon }\sum_{\sigma _{1},\sigma _{2}=\pm 1}\sigma
_{1}\sigma _{2}\int_{0}^{t_{2}}dt_{3}\int_{\mathbb{R}^{3}}dx_{2}\int_{%
\mathbb{R}^{3}}dx_{3}\int_{\mathbb{R}^{3}}dv_{2}\int_{\mathbb{R}%
^{3}}dv_{3}\int_{\mathbb{R}^{3}}dh_{1}\int_{\mathbb{R}^{3}}dh_{2} \\
&&e^{\frac{ih_{1}\cdot (x_{1}-x_{2})}{\varepsilon }}e^{\frac{ih_{2}\cdot %
\left[ \left( x_{1}-(t_{2}-t_{3})\left( v_{1}-\sigma _{1}\frac{h_{1}}{2}%
\right) \right) -x_{3}\right] }{\varepsilon }}\hat{\phi}(h_{1})\hat{\phi}%
(h_{2}) \\
&&\times f_{N}^{(3)}(t_{3},x_{1}-(t_{2}-t_{3})\left( v_{1}-\sigma _{1}\frac{%
h_{1}}{2}\right) ,x_{2}-(t_{2}-t_{3})\left( v_{2}+\sigma _{1}\frac{h_{1}}{2}%
\right) ,x_{3}, \\
&&v_{1}-\sigma _{2}\frac{h_{2}}{2}-\sigma _{1}\frac{h_{1}}{2},v_{2}+\sigma
_{1}\frac{h_{1}}{2},v_{3}+\sigma _{2}\frac{h_{2}}{2}),
\end{eqnarray*}%
thus%
\begin{eqnarray*}
&&\int J(x_{1},v_{1})Q_{1,3}^{\varepsilon ,1}f_{N}^{(3)}dx_{1}dv_{1} \\
&=&\frac{1}{\varepsilon ^{7}}\sum_{\sigma _{1},\sigma _{2}=\pm 1}\sigma
_{1}\sigma _{2}\int d\mathbf{x}_{3}\int d\mathbf{v}_{3}%
\int_{0}^{t_{2}}dt_{3}\int_{\mathbb{R}^{3}}dh_{1}\int_{\mathbb{R}^{3}}dh_{2}
\\
&&e^{\frac{ih_{1}\cdot (x_{1}-x_{2})}{\varepsilon }}e^{\frac{ih_{2}\cdot %
\left[ \left( x_{1}-(t_{2}-t_{3})\left( v_{1}-\sigma _{1}\frac{h_{1}}{2}%
\right) \right) -x_{3}\right] }{\varepsilon }}\hat{\phi}(h_{1})\hat{\phi}%
(h_{2})J(x_{1},v_{1}) \\
&&\times f_{N}^{(3)}(t_{3},x_{1}-(t_{2}-t_{3})\left( v_{1}-\sigma _{1}\frac{%
h_{1}}{2}\right) ,x_{2}-(t_{2}-t_{3})\left( v_{2}+\sigma _{1}\frac{h_{1}}{2}%
\right) ,x_{3}, \\
&&v_{1}-\sigma _{2}\frac{h_{2}}{2}-\sigma _{1}\frac{h_{1}}{2},v_{2}+\sigma
_{1}\frac{h_{1}}{2},v_{3}+\sigma _{2}\frac{h_{2}}{2}).
\end{eqnarray*}%
We move all $h$'s away from $f_{N}^{(3)}$. First, we substitute the $x$-part
with%
\begin{eqnarray*}
x_{1,new} &=&x_{1,old}-(t_{2}-t_{3})\left( v_{1}-\sigma _{1}\frac{h_{1}}{2}%
\right) \\
x_{2,new} &=&x_{2,old}-(t_{2}-t_{3})\left( v_{2}+\sigma _{1}\frac{h_{1}}{2}%
\right)
\end{eqnarray*}%
which gives 
\begin{eqnarray*}
&&\int J(x_{1},v_{1})Q_{1,3}^{\varepsilon ,1}f_{N}^{(3)}dx_{1}dv_{1} \\
&=&\frac{1}{\varepsilon ^{7}}\sum_{\sigma _{1},\sigma _{2}=\pm 1}\sigma
_{1}\sigma _{2}\int d\mathbf{x}_{3}\int d\mathbf{v}_{3}%
\int_{0}^{t_{2}}dt_{3}\int_{\mathbb{R}^{3}}dh_{1}\int_{\mathbb{R}^{3}}dh_{2}
\\
&&e^{\frac{ih_{1}\cdot (x_{1}+(t_{2}-t_{3})\left( v_{1}-\sigma _{1}\frac{%
h_{1}}{2}\right) -x_{2}-(t_{2}-t_{3})\left( v_{2}+\sigma _{1}\frac{h_{1}}{2}%
\right) )}{\varepsilon }}e^{\frac{ih_{2}\cdot \left( x_{1}-x_{3}\right) }{%
\varepsilon }}\hat{\phi}(h_{1})\hat{\phi}(h_{2}) \\
&&J(x_{1}+(t_{2}-t_{3})\left( v_{1}-\sigma _{1}\frac{h_{1}}{2}\right) ,v_{1})
\\
&&f_{N}^{(3)}(t_{3},x_{1},x_{2},x_{3},v_{1}-\sigma _{2}\frac{h_{2}}{2}%
-\sigma _{1}\frac{h_{1}}{2},v_{2}+\sigma _{1}\frac{h_{1}}{2},v_{3}+\sigma
_{2}\frac{h_{2}}{2}).
\end{eqnarray*}%
Then the $v$-substitution: 
\begin{equation*}
v_{1,new}=v_{1,old}-\sigma _{2}\frac{h_{2}}{2}-\sigma _{1}\frac{h_{1}}{2},%
\text{ \ \ }v_{2,new}=v_{2,old}+\sigma _{1}\frac{h_{1}}{2},\text{ \ \ }%
v_{3,new}=v_{3,old}+\sigma _{2}\frac{h_{2}}{2}
\end{equation*}%
gives%
\begin{eqnarray*}
&&\int J(x_{1},v_{1})Q_{1,3}^{\varepsilon ,1}f_{N}^{(3)}dx_{1}dv_{1} \\
&=&\frac{1}{\varepsilon ^{7}}\sum_{\sigma _{1},\sigma _{2}=\pm 1}\sigma
_{1}\sigma _{2}\int d\mathbf{x}_{3}\int d\mathbf{v}_{3}%
\int_{0}^{t_{2}}dt_{3}\int_{\mathbb{R}^{3}}dh_{1}\int_{\mathbb{R}^{3}}dh_{2}
\\
&&e^{\frac{ih_{1}\cdot (x_{1}+(t_{2}-t_{3})\left( v_{1}+\sigma _{2}\frac{%
h_{2}}{2}\right) -x_{2}-(t_{2}-t_{3})v_{2})}{\varepsilon }}e^{\frac{%
ih_{2}\cdot \left( x_{1}-x_{3}\right) }{\varepsilon }}\hat{\phi}(h_{1})\hat{%
\phi}(h_{2}) \\
&&J(x_{1}+(t_{2}-t_{3})\left( v_{1}+\sigma _{2}\frac{h_{2}}{2}\right)
,v_{1}+\sigma _{2}\frac{h_{2}}{2}+\sigma _{1}\frac{h_{1}}{2}) \\
&&f_{N}^{(3)}(t_{3},x_{1},x_{2},x_{3},v_{1},v_{2},v_{3}).
\end{eqnarray*}%
Redo the change of variable: 
\begin{equation*}
t_{3}=t_{2}-\varepsilon s_{1},\text{ \ \ \ }h_{1}=\varepsilon \xi _{1}-h_{2},
\end{equation*}%
we then have%
\begin{eqnarray*}
&&\int J(x_{1},v_{1})Q_{1,3}^{\varepsilon ,1}f_{N}^{(3)}dx_{1}dv_{1} \\
&=&\frac{\varepsilon ^{4}}{\varepsilon ^{7}}\sum_{\sigma _{1},\sigma
_{2}=\pm 1}\sigma _{1}\sigma _{2}\int d\mathbf{x}_{3}\int d\mathbf{v}%
_{3}\int_{0}^{\frac{t_{2}}{\varepsilon }}ds_{1}\int_{\mathbb{R}^{3}}d\xi
_{1}\int_{\mathbb{R}^{3}}dh_{2} \\
&&e^{\frac{i\left( \varepsilon \xi _{1}-h_{2}\right) \cdot
(x_{1}+\varepsilon s_{1}\left( v_{1}+\sigma _{2}\frac{h_{2}}{2}\right)
-x_{2}-\varepsilon s_{1}v_{2})}{\varepsilon }}e^{\frac{ih_{2}\cdot \left(
x_{1}-x_{3}\right) }{\varepsilon }}\hat{\phi}(\varepsilon \xi _{1}-h_{2})%
\hat{\phi}(h_{2}) \\
&&J(x_{1}+\varepsilon s_{1}\left( v_{1}+\sigma _{2}\frac{h_{2}}{2}\right)
,v_{1}+\sigma _{2}\frac{h_{2}}{2}-\sigma _{1}\frac{h_{2}}{2}+\sigma _{1}%
\frac{\varepsilon \xi _{1}}{2}) \\
&&f_{N}^{(3)}(t_{2}-\varepsilon s_{1},x_{1},x_{2},x_{3},v_{1},v_{2},v_{3}).
\end{eqnarray*}%
Write out the phase,%
\begin{eqnarray*}
&=&\frac{1}{\varepsilon ^{3}}\sum_{\sigma _{1},\sigma _{2}=\pm 1}\sigma
_{1}\sigma _{2}\int d\mathbf{x}_{3}\int d\mathbf{v}_{3}\int_{0}^{\frac{t_{2}%
}{\varepsilon }}ds_{1}\int_{\mathbb{R}^{3}}d\xi _{1}\int_{\mathbb{R}%
^{3}}dh_{2} \\
&&e^{\frac{i\varepsilon \xi _{1}\cdot (x_{1}+\varepsilon s_{1}\left(
v_{1}+\sigma _{2}\frac{h_{2}}{2}\right) -x_{2}-\varepsilon s_{1}v_{2})}{%
\varepsilon }}e^{\frac{-ih_{2}\cdot (\varepsilon s_{1}\left( v_{1}+\sigma
_{2}\frac{h_{2}}{2}\right) -\varepsilon s_{1}v_{2})}{\varepsilon }}e^{\frac{%
-ih_{2}\cdot (x_{1}-x_{2})}{\varepsilon }}e^{\frac{ih_{2}\cdot \left(
x_{1}-x_{3}\right) }{\varepsilon }}\hat{\phi}(\varepsilon \xi _{1}-h_{2})%
\hat{\phi}(h_{2}) \\
&&J(x_{1}+\varepsilon s_{1}\left( v_{1}+\sigma _{2}\frac{h_{2}}{2}\right)
,v_{1}+\sigma _{2}\frac{h_{2}}{2}-\sigma _{1}\frac{h_{2}}{2}+\sigma _{1}%
\frac{\varepsilon \xi _{1}}{2}) \\
&&f_{N}^{(3)}(t_{2}-\varepsilon s_{1},x_{1},x_{2},x_{3},v_{1},v_{2},v_{3}).
\end{eqnarray*}%
Rearrange the phase,%
\begin{eqnarray*}
&=&\frac{1}{\varepsilon ^{3}}\sum_{\sigma _{1},\sigma _{2}=\pm 1}\sigma
_{1}\sigma _{2}\int d\mathbf{x}_{3}\int d\mathbf{v}_{3}\int_{0}^{\frac{t_{2}%
}{\varepsilon }}ds_{1}\int_{\mathbb{R}^{3}}d\xi _{1}\int_{\mathbb{R}%
^{3}}dh_{2} \\
&&e^{i\xi _{1}\cdot (x_{1}+\varepsilon s_{1}\left( v_{1}+\sigma _{2}\frac{%
h_{2}}{2}\right) -x_{2}-\varepsilon s_{1}v_{2})}e^{-ih_{2}\cdot (s_{1}\left(
v_{1}+\sigma _{2}\frac{h_{2}}{2}\right) -s_{1}v_{2})}e^{\frac{ih_{2}\cdot
\left( x_{2}-x_{3}\right) }{\varepsilon }}\hat{\phi}(\varepsilon \xi
_{1}-h_{2})\hat{\phi}(h_{2}) \\
&&J(x_{1}+\varepsilon s_{1}\left( v_{1}+\sigma _{2}\frac{h_{2}}{2}\right)
,v_{1}+\sigma _{2}\frac{h_{2}}{2}-\sigma _{1}\frac{h_{2}}{2}+\sigma _{1}%
\frac{\varepsilon \xi _{1}}{2}) \\
&&f_{N}^{(3)}(t_{2}-\varepsilon s_{1},x_{1},x_{2},x_{3},v_{1},v_{2},v_{3})
\end{eqnarray*}%
Now we need to perform one more change of variable to take care of $\left[
ih_{2}\cdot \left( x_{2}-x_{3}\right) \right] /\varepsilon $. We do%
\begin{equation}
x_{3,old}=x_{2}-\varepsilon x_{3,new},  \label{change of variable:Unknown}
\end{equation}%
and arrive at%
\begin{eqnarray*}
&&\int J(x_{1},v_{1})Q_{1,3}^{\varepsilon ,1}f_{N}^{(3)}dx_{1}dv_{1} \\
&=&\frac{\varepsilon ^{3}}{\varepsilon ^{3}}\sum_{\sigma _{1},\sigma
_{2}=\pm 1}\sigma _{1}\sigma _{2}\int d\mathbf{x}_{3}\int d\mathbf{v}%
_{3}\int_{0}^{\frac{t_{2}}{\varepsilon }}ds_{1}\int_{\mathbb{R}^{3}}d\xi
_{1}\int_{\mathbb{R}^{3}}dh_{2} \\
&&e^{i\xi _{1}\cdot (x_{1}+\varepsilon s_{1}\left( v_{1}+\sigma _{2}\frac{%
h_{2}}{2}\right) -x_{2}-\varepsilon s_{1}v_{2})}e^{-ih_{2}\cdot (s_{1}\left(
v_{1}+\sigma _{2}\frac{h_{2}}{2}\right) -s_{1}v_{2})}e^{ih_{2}\cdot x_{3}} \\
&&\hat{\phi}(\varepsilon \xi _{1}-h_{2})\hat{\phi}(h_{2})J(x_{1}+\varepsilon
s_{1}\left( v_{1}+\sigma _{2}\frac{h_{2}}{2}\right) ,v_{1}+\sigma _{2}\frac{%
h_{2}}{2}) \\
&&f_{N}^{(3)}(t_{2}-\varepsilon s_{1},x_{1},x_{2},x_{2}-\varepsilon
x_{3},v_{1},v_{2},v_{3})
\end{eqnarray*}%
which simplifies to%
\begin{eqnarray}
&=&\sum_{\sigma _{1},\sigma _{2}=\pm 1}\sigma _{1}\sigma _{2}\int d\mathbf{x}%
_{3}\int d\mathbf{v}_{3}\int_{0}^{\frac{t_{2}}{\varepsilon }}ds_{1}\int_{%
\mathbb{R}^{3}}d\xi _{1}\int_{\mathbb{R}^{3}}dh_{2}
\label{eqn:cubic term 1 before limit} \\
&&e^{i\xi _{1}\cdot (x_{1}+\varepsilon s_{1}\left( v_{1}+\sigma _{2}\frac{%
h_{2}}{2}\right) -x_{2}-\varepsilon s_{1}v_{2})}e^{-ih_{2}\cdot (s_{1}\left(
v_{1}+\sigma _{2}\frac{h_{2}}{2}\right) -s_{1}v_{2})}e^{ih_{2}\cdot x_{3}}%
\hat{\phi}(\varepsilon \xi _{1}-h_{2})\hat{\phi}(h_{2})  \notag \\
&&J(x_{1}+\varepsilon s_{1}\left( v_{1}+\sigma _{2}\frac{h_{2}}{2}\right)
,v_{1}+\sigma _{2}\frac{h_{2}}{2}-\sigma _{1}\frac{h_{2}}{2}+\sigma _{1}%
\frac{\varepsilon \xi _{1}}{2})  \notag \\
&&f_{N}^{(3)}(t_{2}-\varepsilon s_{1},x_{1},x_{2},x_{2}-\varepsilon
x_{3},v_{1},v_{2},v_{3}).  \notag
\end{eqnarray}%
Taking the $\varepsilon \rightarrow 0$ limit inside, which is justified in 
\S \ref{Sec:Interchange}, we have%
\begin{eqnarray}
&&\lim_{\varepsilon \rightarrow 0}\int J(x_{1},v_{1})Q_{1,3}^{\varepsilon
,1}f_{N}^{(3)}dx_{1}dv_{1}  \label{eqn:cubic term 1 after limit} \\
&=&\sum_{\sigma _{1},\sigma _{2}=\pm 1}\sigma _{1}\sigma _{2}\int d\mathbf{x}%
_{3}\int d\mathbf{v}_{3}\int_{0}^{\infty }ds_{1}\int_{\mathbb{R}^{3}}d\xi
_{1}\int_{\mathbb{R}^{3}}dh_{2}  \notag \\
&&e^{i\xi _{1}\cdot (x_{1}-x_{2})}e^{-ih_{2}\cdot (s_{1}\left( v_{1}+\sigma
_{2}\frac{h_{2}}{2}\right) -s_{1}v_{2})}e^{ih_{2}\cdot x_{3}}\hat{\phi}%
(-h_{2})\hat{\phi}(h_{2})  \notag \\
&&J(x_{1},v_{1}+\sigma _{2}\frac{h_{2}}{2}-\sigma _{1}\frac{h_{2}}{2}%
)f^{(3)}(t_{2},x_{1},x_{2},x_{2},v_{1},v_{2},v_{3}).  \notag
\end{eqnarray}%
Do the $dx_{3}$ (not $d\mathbf{x}_{3}$) integration, 
\begin{eqnarray}
&=&\sum_{\sigma _{1},\sigma _{2}=\pm 1}\sigma _{1}\sigma _{2}\int d\mathbf{x}%
_{2}\int d\mathbf{v}_{3}\int_{0}^{\infty }ds_{1}\int_{\mathbb{R}^{3}}d\xi
_{1}\int_{\mathbb{R}^{3}}dh_{2}  \notag \\
&&e^{i\xi _{1}\cdot (x_{1}-x_{2})}e^{-ih_{2}\cdot (s_{1}\left( v_{1}+\sigma
_{2}\frac{h_{2}}{2}\right) -s_{1}v_{2})}\delta (h_{2})\hat{\phi}(-h_{2})\hat{%
\phi}(h_{2})  \notag \\
&&J(x_{1},v_{1}+\sigma _{2}\frac{h_{2}}{2}-\sigma _{1}\frac{h_{2}}{2}%
)f^{(3)}(t_{2},x_{1},x_{2},x_{2},v_{1},v_{2},v_{3})  \notag
\end{eqnarray}%
Do the $dx_{2}d\xi _{1}dh_{2}$ integration,%
\begin{equation}
=\sum_{\sigma _{1},\sigma _{2}=\pm 1}\sigma _{1}\sigma _{2}\int dx_{1}\int d%
\mathbf{v}_{3}\int_{0}^{\infty }ds_{1}\left\vert \hat{\phi}(0)\right\vert
^{2}J(x_{1},v_{1})f^{(3)}(t_{2},x_{1},x_{1},x_{1},v_{1},v_{2},v_{3})  \notag
\end{equation}%
Since $\hat{\phi}(0)=0$, the above is zero and hence 
\begin{equation*}
\lim_{\varepsilon \rightarrow 0}\int J(x_{1},v_{1})Q_{1,3}^{\varepsilon
,1}f_{N}^{(3)}dx_{1}dv_{1}=0.
\end{equation*}%
Notice that if $\hat{\phi}(0)\neq 0$, the $ds_{1}$ integral yields an
infinity. We formally see that it is necessary for $\phi $ to have zero
integration in order to have the quantum Boltzmann hierarchy (\ref%
{hierarchy:QBHierarchy in differential form}) and hence the quantum
Boltzmann equation (\ref{eqn:proposed QBEquation}). We will go back to (\ref%
{eqn:cubic term 1 before limit}) in \S \ref{Sec:NecessaryCondition} to
discuss more about this. It is natural to wonder if $Q_{1,3}^{\varepsilon
,2}f_{N}^{(3)}$ will carry a negative sign and hence cancel out $%
Q_{1,3}^{\varepsilon ,1}f_{N}^{(3)}.$ Such a guess is not true. The term%
\begin{equation*}
\lim_{\varepsilon \rightarrow 0}\int J(x_{1},v_{1})Q_{1,3}^{\varepsilon
,2}f_{N}^{(3)}dx_{1}dv_{1}
\end{equation*}%
actually equals to (\ref{eqn:cubic term 1 after limit}) with no sign
difference. (See (\ref{eqn:cubic term 2 after limit}).) In below we treat $%
Q_{1,3}^{\varepsilon ,2}f_{N}^{(3)}.$

\subsubsection{Treatment of $Q_{1,3}^{\protect\varepsilon ,2}f_{N}^{(3)}$}

We write%
\begin{eqnarray*}
&&\int_{0}^{t_{2}}S^{(2)}(t_{2}-t_{3})B_{2,3}^{\varepsilon
}f_{N}^{(3)}(t_{3})dt_{3} \\
&=&\sum_{\sigma _{2}=\pm 1}\sigma
_{2}\int_{0}^{t_{2}}S^{(2)}(t_{2}-t_{3})dt_{3}\int_{\mathbb{R}%
^{3}}dx_{3}\int_{\mathbb{R}^{3}}dv_{3}\int_{\mathbb{R}^{3}}dh_{2}e^{\frac{%
ih_{2}\cdot (x_{2}-x_{3})}{\varepsilon }}\hat{\phi}(h_{2}) \\
&&f_{N}^{(3)}\left( t_{3},x_{1},x_{2},x_{3},v_{1},v_{2}-\sigma _{2}\frac{%
h_{2}}{2},v_{3}+\sigma _{2}\frac{h_{2}}{2}\right) \\
&=&\sum_{\sigma _{2}=\pm 1}\sigma _{2}\int_{0}^{t_{2}}dt_{3}\int_{\mathbb{R}%
^{3}}dx_{3}\int_{\mathbb{R}^{3}}dv_{3}\int_{\mathbb{R}^{3}}dh_{2}e^{\frac{%
ih_{2}\cdot \left[ \left( x_{2}-(t_{2}-t_{3})v_{2}\right) -x_{3}\right] }{%
\varepsilon }}\hat{\phi}(h_{2}) \\
&&f_{N}^{(3)}\left(
t_{3},x_{1}-(t_{2}-t_{3})v_{1},x_{2}-(t_{2}-t_{3})v_{2},x_{3},v_{1},v_{2}-%
\sigma _{2}\frac{h_{2}}{2},v_{3}+\sigma _{2}\frac{h_{2}}{2}\right)
\end{eqnarray*}%
then%
\begin{eqnarray*}
&&\frac{N^{2}}{\varepsilon }B_{1,2}^{\varepsilon
}\int_{0}^{t_{2}}S^{(2)}(t_{2}-t_{3})B_{2,3}^{\varepsilon
}f_{N}^{(3)}(t_{3})dt_{3} \\
&=&\frac{N^{2}}{\varepsilon }\sum_{\sigma _{1},\sigma _{2}=\pm 1}\sigma
_{1}\sigma _{2}\int_{0}^{t_{2}}dt_{3}\int_{\mathbb{R}^{3}}dx_{2}\int_{%
\mathbb{R}^{3}}dx_{3}\int_{\mathbb{R}^{3}}dv_{2}\int_{\mathbb{R}%
^{3}}dv_{3}\int_{\mathbb{R}^{3}}dh_{1}\int_{\mathbb{R}^{3}}dh_{2} \\
&&e^{\frac{ih_{1}\cdot (x_{1}-x_{2})}{\varepsilon }}e^{\frac{ih_{2}\cdot %
\left[ \left( x_{2}-(t_{2}-t_{3})\left( v_{2}+\sigma _{1}\frac{h_{1}}{2}%
\right) \right) -x_{3}\right] }{\varepsilon }}\hat{\phi}(h_{1})\hat{\phi}%
(h_{2}) \\
&&f_{N}^{(3)}(t_{3},x_{1}-(t_{2}-t_{3})\left( v_{1}-\sigma _{1}\frac{h_{1}}{2%
}\right) ,x_{2}-(t_{2}-t_{3})\left( v_{2}+\sigma _{1}\frac{h_{1}}{2}\right)
,x_{3}, \\
&&v_{1}-\sigma _{1}\frac{h_{1}}{2},v_{2}+\sigma _{1}\frac{h_{1}}{2}-\sigma
_{2}\frac{h_{2}}{2},v_{3}+\sigma _{2}\frac{h_{2}}{2})
\end{eqnarray*}%
thus%
\begin{eqnarray*}
&&\int J(x_{1},v_{1})Q_{1,3}^{\varepsilon ,2}f_{N}^{(3)}dx_{1}dv_{1} \\
&=&\frac{1}{\varepsilon ^{7}}\sum_{\sigma _{1},\sigma _{2}=\pm 1}\sigma
_{1}\sigma _{2}\int d\mathbf{x}_{3}\int d\mathbf{v}_{3}%
\int_{0}^{t_{2}}dt_{3}\int_{\mathbb{R}^{3}}dh_{1}\int_{\mathbb{R}^{3}}dh_{2}
\\
&&e^{\frac{ih_{1}\cdot (x_{1}-x_{2})}{\varepsilon }}e^{\frac{ih_{2}\cdot %
\left[ \left( x_{2}-(t_{2}-t_{3})\left( v_{2}+\sigma _{1}\frac{h_{1}}{2}%
\right) \right) -x_{3}\right] }{\varepsilon }}\hat{\phi}(h_{1})\hat{\phi}%
(h_{2})J(x_{1},v_{1}) \\
&&f_{N}^{(3)}(t_{3},x_{1}-(t_{2}-t_{3})\left( v_{1}-\sigma _{1}\frac{h_{1}}{2%
}\right) ,x_{2}-(t_{2}-t_{3})\left( v_{2}+\sigma _{1}\frac{h_{1}}{2}\right)
,x_{3}, \\
&&v_{1}-\sigma _{1}\frac{h_{1}}{2},v_{2}+\sigma _{1}\frac{h_{1}}{2}-\sigma
_{2}\frac{h_{2}}{2},v_{3}+\sigma _{2}\frac{h_{2}}{2}).
\end{eqnarray*}%
Again, we change variables to move all $h$'s away from $f_{N}^{(3)}$. We use
the new $x$-variables:%
\begin{eqnarray*}
x_{1,new} &=&x_{1,old}-(t_{2}-t_{3})\left( v_{1}-\sigma _{1}\frac{h_{1}}{2}%
\right) \\
x_{2,new} &=&x_{2,old}-(t_{2}-t_{3})\left( v_{2}+\sigma _{1}\frac{h_{1}}{2}%
\right)
\end{eqnarray*}%
which gives 
\begin{eqnarray*}
&&\int J(x_{1},v_{1})Q_{1,3}^{\varepsilon ,2}f_{N}^{(3)}dx_{1}dv_{1} \\
&=&\frac{1}{\varepsilon ^{7}}\sum_{\sigma _{1},\sigma _{2}=\pm 1}\sigma
_{1}\sigma _{2}\int d\mathbf{x}_{3}\int d\mathbf{v}_{3}%
\int_{0}^{t_{2}}dt_{3}\int_{\mathbb{R}^{3}}dh_{1}\int_{\mathbb{R}^{3}}dh_{2}
\\
&&e^{\frac{ih_{1}\cdot (x_{1}+(t_{2}-t_{3})\left( v_{1}-\sigma _{1}\frac{%
h_{1}}{2}\right) -x_{2}-(t_{2}-t_{3})\left( v_{2}+\sigma _{1}\frac{h_{1}}{2}%
\right) )}{\varepsilon }}e^{\frac{ih_{2}\cdot \left( x_{2}-x_{3}\right) }{%
\varepsilon }}\hat{\phi}(h_{1})\hat{\phi}(h_{2}) \\
&&J(x_{1}+(t_{2}-t_{3})\left( v_{1}-\sigma _{1}\frac{h_{1}}{2}\right) ,v_{1})
\\
&&f_{N}^{(3)}(t_{3},x_{1},x_{2},x_{3},v_{1}-\sigma _{1}\frac{h_{1}}{2}%
,v_{2}+\sigma _{1}\frac{h_{1}}{2}-\sigma _{2}\frac{h_{2}}{2},v_{3}+\sigma
_{2}\frac{h_{2}}{2}).
\end{eqnarray*}%
Then the new velocity variables:%
\begin{equation*}
v_{1,new}=v_{1,old}-\sigma _{1}\frac{h_{1}}{2},\text{ \ }%
v_{2,new}=v_{2,old}+\sigma _{1}\frac{h_{1}}{2}-\sigma _{2}\frac{h_{2}}{2},%
\text{ \ }v_{3,new}=v_{3,old}+\sigma _{2}\frac{h_{2}}{2}
\end{equation*}%
gives%
\begin{eqnarray*}
&&\int J(x_{1},v_{1})Q_{1,3}^{\varepsilon ,2}f_{N}^{(3)}dx_{1}dv_{1} \\
&=&\frac{1}{\varepsilon ^{7}}\sum_{\sigma _{1},\sigma _{2}=\pm 1}\sigma
_{1}\sigma _{2}\int d\mathbf{x}_{3}\int d\mathbf{v}_{3}%
\int_{0}^{t_{2}}dt_{3}\int_{\mathbb{R}^{3}}dh_{1}\int_{\mathbb{R}^{3}}dh_{2}
\\
&&e^{\frac{ih_{1}\cdot (x_{1}+(t_{2}-t_{3})v_{1}-x_{2}-(t_{2}-t_{3})\left(
v_{2}+\sigma _{2}\frac{h_{2}}{2}\right) )}{\varepsilon }}e^{\frac{%
ih_{2}\cdot \left( x_{2}-x_{3}\right) }{\varepsilon }}\hat{\phi}(h_{1})\hat{%
\phi}(h_{2}) \\
&&J(x_{1}+(t_{2}-t_{3})v_{1},v_{1}+\sigma _{1}\frac{h_{1}}{2}%
)f_{N}^{(3)}(t_{3},x_{1},x_{2},x_{3},v_{1},v_{2},v_{3}).
\end{eqnarray*}%
With the change of variables%
\begin{equation*}
t_{3}=t_{2}-\varepsilon s_{1},\text{ \ \ }h_{1}=\varepsilon \xi _{1}-h_{2},
\end{equation*}%
we then have%
\begin{eqnarray*}
&&\int J(x_{1},v_{1})Q_{1,3}^{\varepsilon ,2}f_{N}^{(3)}dx_{1}dv_{1} \\
&=&\frac{\varepsilon ^{4}}{\varepsilon ^{7}}\sum_{\sigma _{1},\sigma
_{2}=\pm 1}\sigma _{1}\sigma _{2}\int d\mathbf{x}_{3}\int d\mathbf{v}%
_{3}\int_{0}^{\frac{t_{2}}{\varepsilon }}ds_{1}\int_{\mathbb{R}^{3}}d\xi
_{1}\int_{\mathbb{R}^{3}}dh_{2} \\
&&e^{\frac{i\left( \varepsilon \xi _{1}-h_{2}\right) \cdot
(x_{1}+\varepsilon s_{1}v_{1}-x_{2}-\varepsilon s_{1}\left( v_{2}+\sigma _{2}%
\frac{h_{2}}{2}\right) )}{\varepsilon }}e^{\frac{ih_{2}\cdot \left(
x_{2}-x_{3}\right) }{\varepsilon }}\hat{\phi}(\varepsilon \xi _{1}-h_{2})%
\hat{\phi}(h_{2}) \\
&&J(x_{1}+\varepsilon s_{1}v_{1},v_{1}+\sigma _{1}\frac{\varepsilon \xi
_{1}-h_{2}}{2})f_{N}^{(3)}(t_{2}-\varepsilon
s_{1},x_{1},x_{2},x_{3},v_{1},v_{2},v_{3}).
\end{eqnarray*}%
Write out the phase,%
\begin{eqnarray*}
&=&\frac{1}{\varepsilon ^{3}}\sum_{\sigma _{1},\sigma _{2}=\pm 1}\sigma
_{1}\sigma _{2}\int d\mathbf{x}_{3}\int d\mathbf{v}_{3}\int_{0}^{\frac{t_{2}%
}{\varepsilon }}ds_{1}\int_{\mathbb{R}^{3}}d\xi _{1}\int_{\mathbb{R}%
^{3}}dh_{2} \\
&&e^{\frac{i\varepsilon \xi _{1}\cdot (x_{1}+\varepsilon
s_{1}v_{1}-x_{2}-\varepsilon s_{1}\left( v_{2}+\sigma _{2}\frac{h_{2}}{2}%
\right) )}{\varepsilon }}e^{\frac{-ih_{2}\cdot (\varepsilon
s_{1}v_{1}-\varepsilon s_{1}\left( v_{2}+\sigma _{2}\frac{h_{2}}{2}\right) )%
}{\varepsilon }}e^{\frac{-ih_{2}\cdot (x_{1}-x_{2})}{\varepsilon }}e^{\frac{%
ih_{2}\cdot \left( x_{2}-x_{3}\right) }{\varepsilon }} \\
&&\hat{\phi}(\varepsilon \xi _{1}-h_{2})\hat{\phi}(h_{2})J(x_{1}+\varepsilon
s_{1}v_{1},v_{1}+\sigma _{1}\frac{\varepsilon \xi _{1}-h_{2}}{2}) \\
&&f_{N}^{(3)}(t_{2}-\varepsilon s_{1},x_{1},x_{2},x_{3},v_{1},v_{2},v_{3}),
\end{eqnarray*}%
that is,%
\begin{eqnarray*}
&=&\frac{1}{\varepsilon ^{3}}\sum_{\sigma _{1},\sigma _{2}=\pm 1}\sigma
_{1}\sigma _{2}\int d\mathbf{x}_{3}\int d\mathbf{v}_{3}\int_{0}^{\frac{t_{2}%
}{\varepsilon }}ds_{1}\int_{\mathbb{R}^{3}}d\xi _{1}\int_{\mathbb{R}%
^{3}}dh_{2} \\
&&e^{i\xi _{1}\cdot (x_{1}+\varepsilon s_{1}v_{1}-x_{2}-\varepsilon
s_{1}\left( v_{2}+\sigma _{2}\frac{h_{2}}{2}\right) )}e^{-ih_{2}\cdot
(s_{1}v_{1}-s_{1}\left( v_{2}+\sigma _{2}\frac{h_{2}}{2}\right) )}e^{\frac{%
ih_{2}\cdot \left( 2x_{2}-x_{1}-x_{3}\right) }{\varepsilon }} \\
&&\hat{\phi}(\varepsilon \xi _{1}-h_{2})\hat{\phi}(h_{2})J(x_{1}+\varepsilon
s_{1}v_{1},v_{1}+\sigma _{1}\frac{\varepsilon \xi _{1}-h_{2}}{2}) \\
&&f_{N}^{(3)}(t_{2}-\varepsilon s_{1},x_{1},x_{2},x_{3},v_{1},v_{2},v_{3}).
\end{eqnarray*}%
Another change of variable%
\begin{equation*}
x_{3,old}=2x_{2}-x_{1}-\varepsilon x_{3,new},
\end{equation*}%
takes us to%
\begin{eqnarray}
&&\int J(x_{1},v_{1})Q_{1,3}^{\varepsilon ,2}f_{N}^{(3)}dx_{1}dv_{1}
\label{eqn:cubic term 2 before limit} \\
&=&\frac{\varepsilon ^{3}}{\varepsilon ^{3}}\sum_{\sigma _{1},\sigma
_{2}=\pm 1}\sigma _{1}\sigma _{2}\int d\mathbf{x}_{3}\int d\mathbf{v}%
_{3}\int_{0}^{\frac{t_{2}}{\varepsilon }}ds_{1}\int_{\mathbb{R}^{3}}d\xi
_{1}\int_{\mathbb{R}^{3}}dh_{2}  \notag \\
&&e^{i\xi _{1}\cdot (x_{1}+\varepsilon s_{1}v_{1}-x_{2}-\varepsilon
s_{1}\left( v_{2}+\sigma _{2}\frac{h_{2}}{2}\right) )}e^{-ih_{2}\cdot
(s_{1}v_{1}-s_{1}\left( v_{2}+\sigma _{2}\frac{h_{2}}{2}\right)
)}e^{ih_{2}\cdot x_{3}}  \notag \\
&&\hat{\phi}(\varepsilon \xi _{1}-h_{2})\hat{\phi}(h_{2})J(x_{1}+\varepsilon
s_{1}v_{1},v_{1}+\sigma _{1}\frac{\varepsilon \xi _{1}-h_{2}}{2})  \notag \\
&&f_{N}^{(3)}(t_{2}-\varepsilon s_{1},x_{1},x_{2},2x_{2}-x_{1}-\varepsilon
x_{3},v_{1},v_{2},v_{3}).  \notag
\end{eqnarray}%
Putting the $\varepsilon \rightarrow 0$ limit inside, which is justified in 
\S \ref{Sec:Interchange}, we have%
\begin{eqnarray}
&&\lim_{\varepsilon \rightarrow 0}\int J(x_{1},v_{1})Q_{1,3}^{\varepsilon
,2}f_{N}^{(3)}dx_{1}dv_{1}  \label{eqn:cubic term 2 after limit} \\
&=&\sum_{\sigma _{1},\sigma _{2}=\pm 1}\sigma _{1}\sigma _{2}\int d\mathbf{x}%
_{3}\int d\mathbf{v}_{3}\int_{0}^{\infty }ds_{1}\int_{\mathbb{R}^{3}}d\xi
_{1}\int_{\mathbb{R}^{3}}dh_{2}  \notag \\
&&e^{i\xi _{1}\cdot (x_{1}-x_{2})}e^{-ih_{2}\cdot (s_{1}v_{1}-s_{1}\left(
v_{2}+\sigma _{2}\frac{h_{2}}{2}\right) )}e^{ih_{2}\cdot x_{3}}\hat{\phi}%
(-h_{2})\hat{\phi}(h_{2})J(x_{1},v_{1}-\sigma _{1}\frac{h_{2}}{2})  \notag \\
&&f^{(3)}(t_{2},x_{1},x_{2},2x_{2}-x_{1},v_{1},v_{2},v_{3})  \notag \\
&=&\sum_{\sigma _{1},\sigma _{2}=\pm 1}\sigma _{1}\sigma _{2}\int d\mathbf{x}%
_{2}\int d\mathbf{v}_{3}\int_{\mathbb{R}^{3}}d\xi _{1}\int_{\mathbb{R}%
^{3}}dh_{2}  \notag \\
&&e^{i\xi _{1}\cdot (x_{1}-x_{2})}e^{-ih_{2}\cdot (s_{1}v_{1}-s_{1}\left(
v_{2}+\sigma _{2}\frac{h_{2}}{2}\right) )}\delta (h_{2})\hat{\phi}(-h_{2})%
\hat{\phi}(h_{2})  \notag \\
&&J(x_{1},v_{1}-\sigma _{1}\frac{h_{2}}{2}%
)f^{(3)}(t_{2},x_{1},x_{2},2x_{2}-x_{1},v_{1},v_{2},v_{3})  \notag
\end{eqnarray}%
which is zero under the same reasoning as in the treatment of $%
Q_{1,3}^{\varepsilon ,1}f_{N}^{(3)}.$

At this point, we have proven that the possible cubic term 
\begin{equation*}
V=\frac{N^{2}}{\varepsilon }\int_{0}^{t_{1}}S^{(1)}(t_{1}-t_{2})B_{%
\varepsilon }^{(2)}\int_{0}^{t_{2}}S^{(2)}(t_{2}-t_{3})B_{\varepsilon
}^{(3)}f_{N}^{(3)}(t_{3})dt_{3}dt_{2}
\end{equation*}%
is zero in the $\varepsilon \rightarrow 0$ limit. Therefore, we have proven
that relation (\ref{target:test with a test function}) holds for $f^{(k)}$
and hence established Theorem \ref{THM:MainTheorem}. The rest of this
section is to prove that we can take the limits inside the integrals under
the assumptions of Theorem \ref{THM:MainTheorem}.

\subsection{Justifying $\lim_{\protect\varepsilon \rightarrow 0}\protect\int %
=\protect\int \lim_{\protect\varepsilon \rightarrow 0}\label{Sec:Interchange}
$}

We interchanged "$\lim_{\varepsilon \rightarrow 0}"$ and "$\int $" in going
from going from (\ref{eqn:quadratic term before limit}) to (\ref%
{eqn:quardratic term after limit}), from (\ref{eqn:cubic term 1 before limit}%
) to (\ref{eqn:cubic term 1 after limit}), and from (\ref{eqn:cubic term 2
before limit}) to (\ref{eqn:cubic term 2 after limit}). We justify (\ref%
{eqn:cubic term 2 before limit}) to (\ref{eqn:cubic term 2 after limit}).
The proof of the other two is similar.

We claim that, if%
\begin{equation*}
\sup_{N}\sum_{j=1}^{3}\sum_{m=0}^{4}\left( \left\Vert \partial
_{x_{j}}^{m}f_{N}^{(3)}(t,\cdot )\right\Vert _{L^{1}}+\left\Vert \partial
_{v_{j}}^{m}f_{N}^{(3)}(t,\cdot )\right\Vert _{L^{1}}\right) <+\infty ,
\end{equation*}%
then let $\varepsilon \rightarrow 0$, we have%
\begin{eqnarray*}
&&\lim_{\varepsilon \rightarrow 0}\int J(x_{1},v_{1})Q_{1,3}^{\varepsilon
,2}f_{N}^{(3)}dx_{1}dv_{1} \\
&=&\sum_{\sigma _{1},\sigma _{2}=\pm 1}\sigma _{1}\sigma _{2}\int d\mathbf{x}%
_{3}\int d\mathbf{v}_{3}\int_{0}^{\infty }ds_{1}\int_{\mathbb{R}^{3}}d\xi
_{1}\int_{\mathbb{R}^{3}}dh_{2} \\
&&e^{i\xi _{1}\cdot (x_{1}-x_{2})}e^{-ih_{2}\cdot (s_{1}v_{1}-s_{1}\left(
v_{2}+\sigma _{2}\frac{h_{2}}{2}\right) )}e^{ih_{2}\cdot x_{3}}\hat{\phi}%
(-h_{2})\hat{\phi}(h_{2})J(x_{1},v_{1}-\sigma _{1}\frac{h_{2}}{2}) \\
&&f^{(3)}(t_{2},x_{1},x_{2},2x_{2}-x_{1},v_{1},v_{2},v_{3})
\end{eqnarray*}%
if $\int J(x_{1},v_{1})Q_{1,3}^{\varepsilon ,2}f_{N}^{(3)}dx_{1}dv_{1}$ is
given by (\ref{eqn:cubic term 1 before limit}).

In fact, rewrite%
\begin{equation*}
\int J(x_{1},v_{1})Q_{1,3}^{\varepsilon ,2}f_{N}^{(3)}dx_{1}dv_{1}=\int_{0}^{%
\frac{t_{2}}{\varepsilon }}ds_{1}\int_{\mathbb{R}^{3}}d\xi
_{1}A_{\varepsilon }(s_{1},\xi _{1})
\end{equation*}%
where 
\begin{eqnarray*}
A_{\varepsilon }(s_{1},\xi _{1}) &\equiv &\int d\mathbf{x}_{3}\int d\mathbf{v%
}_{3}\int_{\mathbb{R}^{3}}dh_{2} \\
&&e^{i\xi _{1}\cdot (x_{1}+\varepsilon s_{1}v_{1}-x_{2}-\varepsilon
s_{1}\left( v_{2}+\sigma _{2}\frac{h_{2}}{2}\right) )}e^{-ih_{2}\cdot
s_{1}[v_{1}-\left( v_{2}+\sigma _{2}\frac{h_{2}}{2}\right) ]}e^{ih_{2}\cdot
x_{3}} \\
&&\hat{\phi}(\varepsilon \xi _{1}-h_{2})\hat{\phi}(h_{2})J(x_{1}+\varepsilon
s_{1}v_{1},v_{1}+\sigma _{1}\frac{\varepsilon \xi _{1}-h_{2}}{2}) \\
&&f_{N}^{(3)}(t_{2}-\varepsilon s_{1},x_{1},x_{2},2x_{2}-x_{1}-\varepsilon
x_{3},v_{1},v_{2},v_{3}).
\end{eqnarray*}%
Let $x_{1}-x_{2}=y_{1},x_{1}+x_{2}=y_{2}$ as well as 
\begin{equation*}
v_{1}-v_{2}-\sigma _{2}\frac{h_{2}}{2}=w_{1},\text{ \ \ }v_{1}+v_{2}-\sigma
_{2}\frac{h_{2}}{2}=w_{2},\text{ \ \ }h_{2}=h_{2},
\end{equation*}%
which makes 
\begin{equation*}
v_{1}=\frac{w_{1}+w_{2}-2\sigma _{2}h_{2}}{2},\text{ \ \ \ }v_{2}=\frac{%
w_{1}-w_{2}}{2},\text{ \ \ \ }h_{2}=h_{2},
\end{equation*}%
we can then transform $A_{\varepsilon }(s_{1},\xi _{1})$ into 
\begin{eqnarray*}
&&A_{\varepsilon }(s_{1},\xi _{1}) \\
&=&\int e^{i\xi _{1}\cdot \{y_{1}+\varepsilon s_{1}[\frac{%
w_{1}+w_{2}-2\sigma _{2}h_{2}}{2}]-\varepsilon s_{1}\left( \frac{w_{1}-w_{2}%
}{2}+\sigma _{2}\frac{h_{2}}{2}\right) \}}e^{-ih_{2}\cdot
s_{1}w_{1}}e^{ih_{2}\cdot x_{3}}\hat{\phi}(\varepsilon \xi _{1}-h_{2})\hat{%
\phi}(h_{2}) \\
&&J(\frac{y_{1}+y_{2}}{2}+\varepsilon s_{1}\frac{w_{1}+w_{2}-2\sigma
_{2}h_{2}}{2},\frac{w_{1}+w_{2}-2\sigma _{2}h_{2}}{2}+\sigma _{1}\frac{%
\varepsilon \xi _{1}-h_{2}}{2}) \\
&&f_{N}^{(3)}(t_{2}-\varepsilon s_{1},\frac{y_{1}+y_{2}}{2},\frac{y_{1}-y_{2}%
}{2},\frac{y_{1}}{2}-\frac{3y_{2}}{2}-\varepsilon x_{3},\frac{%
w_{1}+w_{2}-2\sigma _{2}h_{2}}{2},\frac{w_{1}-w_{2}}{2},v_{3}). \\
&\equiv &\int e^{i\xi _{1}\cdot y_{1}}e^{-ih_{2}\cdot
s_{1}w_{1}}e^{ih_{2}\cdot x_{3}}B,
\end{eqnarray*}%
where $B(\varepsilon s_{1},y_{1},y_{2},\varepsilon
x_{3},w_{1},w_{2},\varepsilon \xi _{1},h_{2},v_{3})\equiv $%
\begin{eqnarray}
&&e^{i\xi _{1}\cdot \varepsilon s_{1}\{[\frac{w_{1}+w_{2}-2\sigma _{2}h_{2}}{%
2}]-\left( \frac{w_{1}-w_{2}}{2}+\sigma _{2}\frac{h_{2}}{2}\right) \}}\hat{%
\phi}(\varepsilon \xi _{1}-h_{2})\hat{\phi}(h_{2})  \notag \\
&&J(\frac{y_{1}+y_{2}}{2}+\varepsilon s_{1}\frac{w_{1}+w_{2}-2\sigma
_{2}h_{2}}{2},\frac{w_{1}+w_{2}-2\sigma _{2}h_{2}}{2}+\sigma _{1}\frac{%
\varepsilon \xi _{1}-h_{2}}{2})  \label{b} \\
&&f_{N}^{(3)}(t_{2}-\varepsilon s_{1},\frac{y_{1}+y_{2}}{2},\frac{y_{1}-y_{2}%
}{2},\frac{y_{1}}{2}-\frac{3y_{2}}{2}-\varepsilon x_{3},\frac{%
w_{1}+w_{2}-2\sigma _{2}h_{2}}{2},\frac{w_{1}-w_{2}}{2},v_{3})  \notag
\end{eqnarray}%
Clearly, for bounded $\xi _{1},x_{3}$ and $s_{1},$such a integral is finite.
We only need to control large $\xi _{1},x_{3}$ and $s_{1}$ to pass to the
limit.

Upon using standard\ smooth cutoff functions, we only need to concentrate
the most singular region of 
\begin{equation}
\left\vert \xi _{1}\right\vert \geqslant 1,\left\vert x_{3}\right\vert
\geqslant 1,\left\vert s_{1}\right\vert \geqslant 1.  \label{region}
\end{equation}
we may further assume that in such a region, 
\begin{equation}
\left\vert \xi _{1}^{1}\right\vert \gtrsim |\xi _{1}|,\text{ \ \ }\left\vert
x_{3}^{1}\right\vert \gtrsim |x_{3}|,\text{ \ }|h_{2}^{1}|\gtrsim |h_{2}|
\label{component}
\end{equation}%
All the other cases are simpler and can be controlled similarly.

We first integrate by part in $y_{1}^{1},w_{1}^{1}$ repeatedly, (since $%
\varepsilon s_{1}$ is bounded), to obtain%
\begin{eqnarray*}
&&\int dy_{1}dy_{2}dx_{3}dw_{1}dw_{2}dv_{3}dh_{2}\frac{1}{\{\xi
_{1}^{1}\}^{m}s_{1}^{m}\{h_{2}^{1}\}^{m}}e^{i\xi _{1}\cdot
y_{1}}e^{-ih_{2}\cdot s_{1}w_{1}}e^{ih_{2}\cdot x_{3}} \\
&&\partial _{y_{1}^{1}}^{m}\partial _{w_{1}^{1}}^{m}B(\varepsilon
s_{1},y_{1},y_{2},x_{3},w_{1},w_{2},\xi _{1},h_{2},v_{3}) \\
&=&\int dy_{1}dy_{2}dx_{3}dw_{1}dw_{2}dv_{3}dh_{2}\frac{1}{\{\xi
_{1}^{1}\}^{m}s_{1}^{m}\{h_{2}^{1}\}^{m}}e^{i\xi _{1}\cdot
y_{1}}e^{-ih_{2}\cdot s_{1}w_{1}} \\
&&\frac{1}{x_{3}^{1}}\frac{d\{e^{ih_{2}^{1}x_{3}^{1}}\}}{dh_{2}^{1}}\partial
_{y_{1}^{1}}^{m}\partial _{w_{1}^{1}}^{m}B(\varepsilon
s_{1},y_{1},y_{2},x_{3},w_{1},w_{2},\xi _{1},h_{2},v_{3}).
\end{eqnarray*}%
we then take integration by part in $h_{2}^{1}$ \ four times\ as above to
get the worse term, in terms of vanishing order of $\hat{\phi}(\varepsilon
\xi _{1}-h_{2})\hat{\phi}(h_{2})$ in $B$ as 
\begin{eqnarray*}
&\backsim &\sum_{j}\int dy_{1}dy_{2}dx_{3}dw_{1}dw_{2}dv_{3}dh_{2}\frac{%
s_{1}^{j}\{w_{1}^{1}\}^{j}}{\{\xi
_{1}^{1}\}^{m}s_{1}^{m}\{h_{2}^{1}\}^{m}\{x_{1}^{1}\}^{j}}e^{i\xi _{1}\cdot
y_{1}}e^{-ih_{2}\cdot s_{1}w_{1}}e^{ih_{2}\cdot x_{3}} \\
&&\partial _{h_{2}^{1}}^{4-j}\partial _{h_{2}^{1}}^{m}\partial
_{y_{1}^{1}}^{m}\partial _{w_{1}^{1}}^{m}B(\varepsilon
s_{1},y_{1},y_{2},\varepsilon x_{3},w_{1},w_{2},\varepsilon \xi
_{1},h_{2},v_{3}) \\
&\backsim &\sum_{j=0}^{m}\int dy_{1}dy_{2}dx_{3}dw_{1}dw_{2}dv_{3}dh_{2}%
\frac{s_{1}^{j}\{w_{1}^{1}\}^{j}}{\{\xi
_{1}^{1}\}^{m}s_{1}^{m}\{h_{2}^{1}\}^{m}\{x_{1}^{1}\}^{j}}e^{i\xi _{1}\cdot
y_{1}}e^{-ih_{2}\cdot s_{1}w_{1}}e^{ih_{2}\cdot x_{3}} \\
&&\partial _{h_{2}^{1}}^{4-j+m}[\hat{\phi}(h_{2})\hat{\phi}(\varepsilon \xi
_{1}-h_{2})]\times B_{j}(\varepsilon s_{1},y_{1},y_{2},\varepsilon
x_{3},w_{1},w_{2},\varepsilon \xi _{1},h_{2},v_{3}).
\end{eqnarray*}%
Here $B_{j}$ is some nice function with decay in $w_{2}$ so that the growth
in $w_{1}^{1}$ is under control. Hence, this is uniformly integrable\ for
large $\xi _{1},s_{1},$ $x_{3},h_{2}$ if $m>5$ by (\ref{component}) and (\ref%
{region}). It suffices to control small $h_{2}^{1}$ for $|h_{2}^{1}|<1.$

We now use the vanishing condition of $\hat{\phi}$: $\hat{\phi}%
(h_{2})=h_{2}^{n}$ for $|h_{2}|\leq 1,$ so that for $0\leq j\leq 4,$%
\begin{equation*}
|\partial _{h_{2}^{1}}^{m+4-j}[\hat{\phi}(h_{2})\hat{\phi}(\varepsilon \xi
_{1}-h_{2})]|\leq h_{2}^{n-m-4}.
\end{equation*}%
Hence by (\ref{component}) near $h_{2}=0\,$, the integral has a singularity
of $\frac{1}{h_{2}^{4-n+2m}}.$ If $4-n+2m<3,$ or $n\geq 1+2m>11,$ then we
know that $\int_{|h_{2}|\leq 1}\frac{1}{h_{2}^{4-n+m}}dh_{2}<+\infty ,$ and
it is uniformly bounded integrable, by (\ref{component}). Hence, we can
interchange "$\lim_{\varepsilon \rightarrow 0}$" and "$\int $" in going from
(\ref{eqn:cubic term 2 before limit}) to (\ref{eqn:cubic term 2 after limit}%
) as claimed.

\subsection{It is Necessary to Have $\protect\int \protect\phi =0\label%
{Sec:NecessaryCondition}$}

Recall (\ref{eqn:cubic term 1 before limit}),%
\begin{eqnarray}
&&\int J(x_{1},v_{1})Q_{1,3}^{\varepsilon ,1}f_{N}^{(3)}dx_{1}dv_{1}
\label{eqn:cubic term 1 before limit for necessasity} \\
&=&\sum_{\sigma _{1},\sigma _{2}=\pm 1}\sigma _{1}\sigma _{2}\int d\mathbf{x}%
_{3}\int d\mathbf{v}_{3}\int_{0}^{\frac{t_{2}}{\varepsilon }}ds_{1}\int_{%
\mathbb{R}^{3}}d\xi _{1}\int_{\mathbb{R}^{3}}dh_{2}  \notag \\
&&e^{i\xi _{1}\cdot (x_{1}+\varepsilon s_{1}\left( v_{1}+\sigma _{2}\frac{%
h_{2}}{2}\right) -x_{2}-\varepsilon s_{1}v_{2})}e^{-ih_{2}\cdot (s_{1}\left(
v_{1}+\sigma _{2}\frac{h_{2}}{2}\right) -s_{1}v_{2})}e^{ih_{2}\cdot x_{3}}%
\hat{\phi}(\varepsilon \xi _{1}-h_{2})\hat{\phi}(h_{2})  \notag \\
&&J(x_{1}+\varepsilon s_{1}\left( v_{1}+\sigma _{2}\frac{h_{2}}{2}\right)
,v_{1}+\sigma _{2}\frac{h_{2}}{2}-\sigma _{1}\frac{h_{2}}{2}+\sigma _{1}%
\frac{\varepsilon \xi _{1}}{2})  \notag \\
&&f_{N}^{(3)}(t_{2}-\varepsilon s_{1},x_{1},x_{2},x_{2}-\varepsilon
x_{3},v_{1},v_{2},v_{3}).  \notag
\end{eqnarray}%
To see that it is necessary to have $\int \phi =0$ in order to have a $%
\varepsilon \rightarrow 0$ limit for the BBGKY hierarchy (\ref%
{hierarchy:QB-BBGKY in Differential Form}) and hence a possible derivation
for the quantum Boltzmann hierarchy (\ref{hierarchy:QBHierarchy in
differential form}) and the quantum Boltzmann equation (\ref{eqn:proposed
QBEquation}), we analysis the size of (\ref{eqn:cubic term 1 before limit
for necessasity}). To avoid some technical issues, let us assume that $%
J(...)f_{N}^{(3)}(...)$ is a test function $g,$ because the main point here
is the phase 
\begin{equation*}
e^{-ih_{2}\cdot (s_{1}\left( v_{1}+\sigma _{2}\frac{h_{2}}{2}\right)
-s_{1}v_{2})}e^{ih_{2}\cdot x_{3}}.
\end{equation*}%
Do the $dx_{3}$ integrals in (\ref{eqn:cubic term 1 before limit for
necessasity}), we have%
\begin{eqnarray*}
&=&\sum_{\sigma _{1},\sigma _{2}=\pm 1}\sigma _{1}\sigma _{2}\int d\mathbf{x}%
_{2}\int d\mathbf{v}_{3}\int_{0}^{\frac{t_{2}}{\varepsilon }}ds_{1}\int_{%
\mathbb{R}^{3}}d\xi _{1}\int_{\mathbb{R}^{3}}dh_{2} \\
&&e^{i\xi _{1}\cdot (x_{1}+\varepsilon s_{1}\left( v_{1}+\sigma _{2}\frac{%
h_{2}}{2}\right) -x_{2}-\varepsilon s_{1}v_{2})}e^{-ih_{2}\cdot (s_{1}\left(
v_{1}+\sigma _{2}\frac{h_{2}}{2}\right) -s_{1}v_{2})} \\
&&\hat{\phi}(\varepsilon \xi _{1}-h_{2})\hat{\phi}(h_{2})\frac{1}{%
\varepsilon ^{3}}\hat{g}(t_{2}-\varepsilon s_{1},x_{1},x_{2},\frac{h_{2}}{%
\varepsilon },v_{1},v_{2},v_{3}).
\end{eqnarray*}%
Here $\hat{g}$ means the Fourier transform in $x_{3}$. We then find that,
for every $t_{2}$, $\mathbf{x}_{2}$, $\mathbf{v}_{3}$, we effectively have a 
$\delta (h_{2})$, so that the $dh_{2}$ integral is restricted to have size $%
\left\vert h_{2}\right\vert \lesssim \varepsilon .$ Now, say $t_{2}\lesssim
1 $, we know $\left\vert s_{1}h_{2}\right\vert \lesssim 1$ and hence%
\begin{equation*}
e^{-ih_{2}\cdot (s_{1}\left( v_{1}+\sigma _{2}\frac{h_{2}}{2}\right)
-s_{1}v_{2})}\sim 1
\end{equation*}%
which then makes the $ds_{1}$ integral to blow up as $\varepsilon
\rightarrow 0$ if $\hat{\phi}(0)\neq 0.$

\appendix

\section{The Cubic Term of (\protect\ref{eqn:U-U}) when $\hat{\protect\phi}%
(0)=0$}

Theorem \ref{THM:MainTheorem} is unexpected. It rules out the possibility to
have the Uehling-Uhlenbeck equation (\ref{eqn:U-U}) as the mean-field
equation from the BBGKY hierarchy (\ref{hierarchy:QB-BBGKY in Differential
Form}) in the sense that if $f$ solves (\ref{eqn:U-U}), then 
\begin{equation*}
f^{(k)}(t,\mathbf{x}_{k},\mathbf{v}_{k})=\dprod%
\limits_{j=1}^{k}f(t,x_{j},v_{j})
\end{equation*}%
is not a solution to the $N\rightarrow \infty $ limit of the BBGKY hierarchy
(\ref{hierarchy:QB-BBGKY in Differential Form}). It is then natural to
wonder if the assumption: $\hat{\phi}(0)=0,$ implies that the cubic term in
the Uehling-Uhlenbeck equation (\ref{eqn:U-U}) is zero. Such an statement is
unlikely to be true. We include a discussion here for completeness. On the
one hand, if $\hat{\phi}(0)\neq 0$, the $\varepsilon \rightarrow 0$ limit
for the BBGKY hierarchy (\ref{hierarchy:QB-BBGKY in Differential Form}) has
an infinite cubic term as shown formally in the proof of Theorem \ref%
{THM:MainTheorem} and in \S \ref{Sec:NecessaryCondition}. On the other hand,
recall the cubic term in (\ref{eqn:U-U})%
\begin{equation*}
M\left( f\right) =8\pi ^{3}\theta \int dv_{\ast }\int dv_{\ast }^{\prime
}\int dv^{\prime }W\left( v,v^{\prime }|v_{\ast },v_{\ast }^{\prime }\right) 
\left[ \left( f^{\prime }f_{\ast }^{\prime }f+f^{\prime }f_{\ast }^{\prime
}f_{\ast }\right) -\left( ff_{\ast }f_{\ast }^{\prime }+ff_{\ast }f^{\prime
}\right) \right] ,
\end{equation*}%
and%
\begin{eqnarray*}
W\left( v,v^{\prime }|v_{\ast },v_{\ast }^{\prime }\right) &=&\frac{1}{8\pi
^{2}}\left[ \hat{\phi}(v^{\prime }-v)+\theta \hat{\phi}(v^{\prime }-v_{\ast
})\right] ^{2}\delta (v+v_{\ast }-v^{\prime }-v_{\ast }^{\prime }) \\
&&\times \delta (\frac{1}{2}\left( v^{2}+v_{\ast }^{2}-\left( v^{\prime
}\right) ^{2}-\left( v_{\ast }^{\prime }\right) ^{2}\right) ).
\end{eqnarray*}%
Let us suppress the $(t,x)$ dependence in $f,$ $f^{\prime },f_{\ast },$ $%
f_{\ast }^{\prime }$ and write 
\begin{equation*}
f=f(v),\text{ }f^{\prime }=f(v^{\prime }),f_{\ast }=f(v_{\ast }),\text{ }%
f_{\ast }^{\prime }=f(v_{\ast }^{\prime }).
\end{equation*}%
since the integral we are considering has nothing to do with that. With the
usual parametrization:%
\begin{equation*}
v^{\prime }=v+\left[ \left( v-v_{\ast }\right) \cdot \omega \right] \omega ,%
\text{ \ \ \ }v_{\ast }^{\prime }=v_{\ast }-\left[ \left( v-v_{\ast }\right)
\cdot \omega \right] \omega ,
\end{equation*}%
we reach%
\begin{eqnarray*}
M\left( f\right) &=&\pi \theta \int_{\mathbb{R}^{3}}dv_{\ast }\int_{\mathbb{S%
}^{2}}dS_{\omega }\left\vert v-v_{\ast }\right\vert \\
&&\times \left[ \hat{\phi}(\left[ \left( v-v_{\ast }\right) \cdot \omega %
\right] \omega )+\theta \hat{\phi}\left( \left( v-v_{\ast }\right) +\left[
\left( v-v_{\ast }\right) \cdot \omega \right] \omega \right) \right] ^{2} \\
&&\times \left[ f^{\prime }f_{\ast }^{\prime }\left( f+f_{\ast }\right)
-ff_{\ast }\left( f^{\prime }+f_{\ast }^{\prime }\right) \right] .
\end{eqnarray*}%
Let $\theta =1.$ Assume that $\hat{\phi}$ does not change sign and $\hat{\phi%
}\left( \xi \right) =0$ only at $\xi =0$\footnote{%
Such a potential $\phi $ could be constructed.}, then%
\begin{equation*}
\left\vert v-v_{\ast }\right\vert \left[ \hat{\phi}(\left[ \left( v-v_{\ast
}\right) \cdot \omega \right] \omega )+\theta \hat{\phi}\left( \left(
v-v_{\ast }\right) +\left[ \left( v-v_{\ast }\right) \cdot \omega \right]
\omega \right) \right] ^{2}=0
\end{equation*}%
only when $v_{\ast }=v$ which is a measure zero set. It is then hard to
believe that if $\hat{\phi}\left( \xi \right) =0$ only at $\xi =0$ will make 
$M(f)=0$ for every $f$.

\end{document}